\def\th@plain{%
  %\upshape 
  \itshape % body font
}
\renewenvironment{proof}[1][\proofname]{\par
  \pushQED{\qed}%
  \normalfont \topsep6\p@\@plus6\p@\relax
  \trivlist
  \item[\hskip\labelsep
        \bfseries
    #1\@addpunct{.}]\ignorespaces
}{%
  \popQED\endtrivlist\@endpefalse
}
\newtheorem{theorem}{Theorem}[section]
\numberwithin{equation}{section}
\newtheorem{thm}{Theorem}%[section]
\newtheorem{lem}[thm]{Lemma}
\newtheorem{question}{Question}
\newtheorem{conj}[thm]{Conjecture}
\newtheorem{pblm}{Problem}
\newtheorem{remark}{Remark}
\newtheorem{clm}{Claim}
\numberwithin{equation}{section}
\newcommand{\mad}{\mathrm{mad}}
\numberwithin{equation}{section}
\newcommand{\m}{\mathrm{mad}}
\newcommand{\ad}{\mathrm{ad}}
\begin{document}
\title{\LARGE  Odd coloring of 2-boundary planar graphs and beyond
\thanks{Supported by the Natural Science Basic Research Plan in Shaanxi Province of China (No.\,2023-JC-YB-001).}
\thanks{Mathematics Subject Classification (2020): 05C15, 05C10}
}
\author{Weichan Liu$^1$ ~~~~ Mengke Qi$^2$ ~~~~ Xin Zhang$^2$\thanks{Corresponding author. Email: xzhang@xidian.edu.cn.}\\
{\small 1. School of Mathematics, Shandong University, Jinan, 250100, China}\\
{\small 2. School of Mathematics and Statistics, Xidian University, Xi'an, 710071, China}}

%\date{}

\maketitle

\begin{abstract}\baselineskip 0.60cm
In this paper, we introduce the notion of 2-boundary planar graphs. A graph is 2-boundary planar if it has an embedding in the plane so that all vertices lie on the boundary of at most two faces and no edges are crossed. A proper coloring of a graph is odd if every non-isolated vertex has some color that appears an odd number of times on its neighborhood. Petru\v{s}evski and \v{S}krekovski conjectured in 2022 that every planar graph admits an odd $5$-coloring. We confirm this conjecture for 2-boundary planar graphs. Moreover, we present several questions regarding 2-boundary planar graphs that are of independent interest.

\vspace{3mm}\noindent \emph{Keywords}: odd coloring; planar graph; 2-boundary planar graph.
\end{abstract}

\baselineskip 0.60cm

\section{Introduction}

%All graphs in this paper are simple. 
Recently, Petru\v{s}evski and \v{S}krekovski \cite{Petruvsevski2021ColoringsWN} introduced the new notion of odd coloring. 
An \textit{odd $k$-coloring} of a graph $G$ is a vertex coloring $\varphi:V(G)\longrightarrow [k]$ such that 
\begin{itemize}
    \item $\varphi(u)\neq \varphi(v)$ for each pair of adjacent vertices $u$ and $v$;
    \item for every non-isolated vertex $v\in V(G)$ there is a color $i\in [k]$ such that the size of $\{u\in N_G(v)~|~\varphi(u)=i\}$ is odd,
\end{itemize}
where $[k]:=\{1,2,\ldots,k\}$.
Let $$\chi_o(G)=\min\{k~|~G~ {\rm has~ an~ odd} ~k{\rm \mbox{-} coloring}\}$$  be the 
\textit{odd chromatic number} of $G$.
% A proper coloring of a graph is \textit{odd} if every non-isolated vertex has some color that appears an odd number of times on its neighborhood. The \textit{odd chromatic number} of $G$, denoted by $\chi_o(G)$, is the smallest number $k$ such that $G$ admits an odd $k$-coloring.
Petru\v{s}evski and \v{S}krekovski \cite{Petruvsevski2021ColoringsWN} showed that if $G$ is a planar graph then $\chi_o(G)\leq 9$, and made the following conjecture.

\begin{conj}\label{conj}
$\chi_o(G)\leq 5$ for  every planar graph $G$.
\end{conj}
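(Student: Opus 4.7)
The plan is to follow the well-worn template for planar coloring conjectures: take a minimum counterexample $G$ (a planar graph with $\chi_o(G)>5$ minimizing $|V(G)|+|E(G)|$), show that $G$ must contain a short list of ``reducible'' local configurations, and then establish via a discharging argument that every planar graph must actually contain one of them. By minimality, every proper subgraph of $G$ admits an odd $5$-coloring, so the whole exercise is to leverage such a coloring on $G-H$ (for a small subgraph $H$) and extend it to $G$ while preserving the odd property globally. Unlike proper coloring, the odd condition at a vertex $v$ depends on a parity count across $N(v)$, so modifications propagate farther than a single edge; the reducibility lemmas must therefore track both the colors and the parities in a bounded neighborhood.

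\textbf{Reducible configurations.} First I would whittle down the local structure of $G$. Standard easy reductions rule out vertices of degree $1$ or $2$, since such a vertex can be removed and restored with essentially free choice of color. Harder but expected reductions should force $\delta(G)\ge 4$, forbid two adjacent $4^-$-vertices, and, more generally, forbid short paths or short cycles whose vertices all have small degree. Because the odd condition is hardest to fulfil at $4$-vertices (four neighbors could split $2{+}2$ into two colors, neither appearing an odd number of times), much of the structural work will concern the neighborhoods of such vertices, in particular configurations where a $4$-vertex has too few high-degree neighbors to supply a ``swing'' color capable of repairing parity. Each reduction follows the same pattern: delete or contract a chosen subgraph, extend the odd $5$-coloring inductively, and correct parities on a bounded corrective set using an exchange along a small Kempe-style subgraph.

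\textbf{Discharging.} Next I would endow each vertex $v$ with initial charge $\deg(v)-4$ and each face $f$ with charge $\deg(f)-4$, so that Euler's formula yields a total charge of $-8$. Discharging rules would then redistribute charge from high-degree vertices and long faces toward small-degree vertices and triangles, aiming to show that in the absence of every reducible configuration uncovered in the previous step, every vertex and face ends with nonnegative charge. Obtaining a clean discharging scheme for the entire class of planar graphs is delicate; I would likely need separate rules for configurations of triangles incident to $4$-vertices and for $5$-vertices whose neighborhoods are rich in $4$-vertices, tuning the fractions so that the cumulative savings forced by each reducible configuration exactly cancel the deficit carried by low-degree vertices.

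\textbf{Main obstacle.} The crux is that the odd constraint is inherently nonlocal: extending a coloring at one vertex can force parity failures several edges away, so the ``forbidden list'' reasoning that powers most planar coloring proofs is too weak on its own. I expect the fight to concentrate on clusters of adjacent $4$-vertices, where a naive extension tends to leave every candidate color paired up in the critical neighborhoods; handling these clusters may require either substantially enlarged reducible configurations or a considerably finer discharging scheme. Striking the right balance between these two refinements, so that the inevitable parity corrections can always be absorbed within the palette of five colors, is where the heart of the proof must lie, and this is precisely what separates the full conjecture from the special subclasses treated in the body of this paper.
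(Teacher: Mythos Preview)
The statement you are attempting to prove is not a theorem in the paper; it is Conjecture~\ref{conj}, due to Petru\v{s}evski and \v{S}krekovski, and it remains \emph{open}. The paper does not contain a proof of it. What the paper actually establishes are two partial results, Theorems~\ref{thm-1} and~\ref{thm-2}, confirming the conjecture only for outer-1-planar graphs and for 2-boundary planar graphs. For general planar graphs the best bound currently known is $\chi_o(G)\le 8$, due to Petr and Portier, as the paper itself notes.

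Your proposal is therefore not comparable to any proof in the paper, and more to the point it is not itself a proof: it is a high-level outline of the standard minimum-counterexample-plus-discharging template, with the reducible configurations left unspecified (``should force $\delta(G)\ge 4$'', ``harder but expected reductions'') and the discharging rules left to be ``tuned''. You are candid about this in your final paragraph, where you correctly identify the real obstruction: the odd condition at an even-degree vertex is a parity constraint that can fail after recoloring a neighbor, so the usual list-counting arguments do not close off the cases, particularly around clusters of $4$-vertices. That difficulty is exactly why no one has pushed the bound below $8$, and nothing in your sketch indicates how to overcome it. In short, the proposal names a plausible framework but supplies none of the content that would be needed to resolve an open problem; there is no proof here to evaluate.
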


\noindent It is easy to see that $\chi_o(C_5)=5$, so the bound in this conjecture is sharp if it was correct. 
Recently, Petr and Portier \cite{Petr2022TheOC} improved the bound for the odd chromatic number of planar graphs to 8, which is best known until now.

Towards Conjecture \ref{conj}, Caro, Petru{\v{s}}evski, and {\v{S}}krekovski \cite{zbMATH07585605} showed that every outerplanar graph is odd 5-colorable. Kashima and Zhu \cite{zbMATH07938186} showed that a connected outerplanar graph G is odd 4-colorable if and only if $G$ contains a block which is not a copy of the cycle of length 5. This strengthens the result by Caro, Petru{\v{s}}evski, and {\v{S}}krekovski, and gives a complete characterization of odd 4-colorable outerplanar graphs. Besides, it is natural to consider planar graphs with girth or cycle restriction. 
For each integer $t\leq 7$, let $g_t$ be the minimum integer such that every planar graph with girth at least $g_t$ has an odd $t$-coloring.
Cranston \cite{Cranston2022OddCOO} showed $g_6\leq 6$ and $g_5\leq 7$. Cho, Choi, Kwon, and Park \cite{Cho2022OddCO} 
%made an improvement that $g_6\leq 5$???, and moreover, they 
showed $g_4\leq 11$ and $g_7\leq 5$.
Note that $g_3=+\infty$ as cycles of length not multiple of $3$ are not odd $3$-colorable.
Wang and Yang \cite{zbMATH07785865} showed that if $G$ is a planar graph without $4^-$-cycles adjacent to $7^-$-cycles, then $G$ is odd 6-colorable.

The \textit{average degree} $\ad(G)$ of a graph $G$ is $2|E(G)|/|V(G)|$.
The \textit{maximum average degree} $\m(G)$ of a graph $G$ is the value of $\max\{\ad(H)~|~H\subseteq G\}$.
From the well-known Euler's formula, one can conclude that $\mad(G)<2g/(g-2)$ for every planar graph $G$ with girth at least $g$.
So a natural way to generate the above results on planar graphs with  girth restriction is to investigate the same problem for graphs with bounded maximum average degree. In view of this, Cranston \cite{Cranston2022OddCOO} showed $\chi_o(G)\leq 5$ if $\mad(G)< 20/7$, and $\chi_o(G)\leq 6$ if $\mad(G)< 3$.
Cho, Choi, Kwon, and Park \cite{Cho2022OddCO} proved $\chi_o(G)\leq t$ if $t\geq 7$ and $\mad(G)< 4t/(t-2)$, 
and $\chi_o(G)\leq 4$ if $G$ is an induced $5$-cycle-free graph with $\mad(G)\leq 22/9$. Note that the latter result implies $g_4\leq 11$ that was mentioned before.

Apart from Conjecture \ref{conj},  it is noteworthy that numerous research groups have delved into the study of odd coloring of 1-planar graphs, which are graphs that can be embedded in the plane so that each edge is crossed at most once. 
Cranston, Lafferty, and Song \cite{zbMATH07662546} proved $\chi_o(G)\leq 23$ for every planar graph $G$. This bound was improved to 16 by Niu and Zhang \cite{zbMATH07708542}, and later to 13 by Liu, Wang, and Yu \cite{zbMATH07690011}.

In this paper, we introduce a subclass of planar graphs that generalizes outplanar graphs.
A \textit{2-boundary planar graph} is a graph that has an embedding in the plane so that all vertices lie on the boundary of at most two faces and no edge is crossed, and we call such an embedding \textit{2-boundary plane graph}. Figure \ref{fig:example} gives some examples of 2-boundary plane graphs; note that they are not outerplanar, as we can find a $K_4$-minor in each graph.

\begin{figure}[htp]\label{fig:example}
    \centering
    \includegraphics[width=13cm]{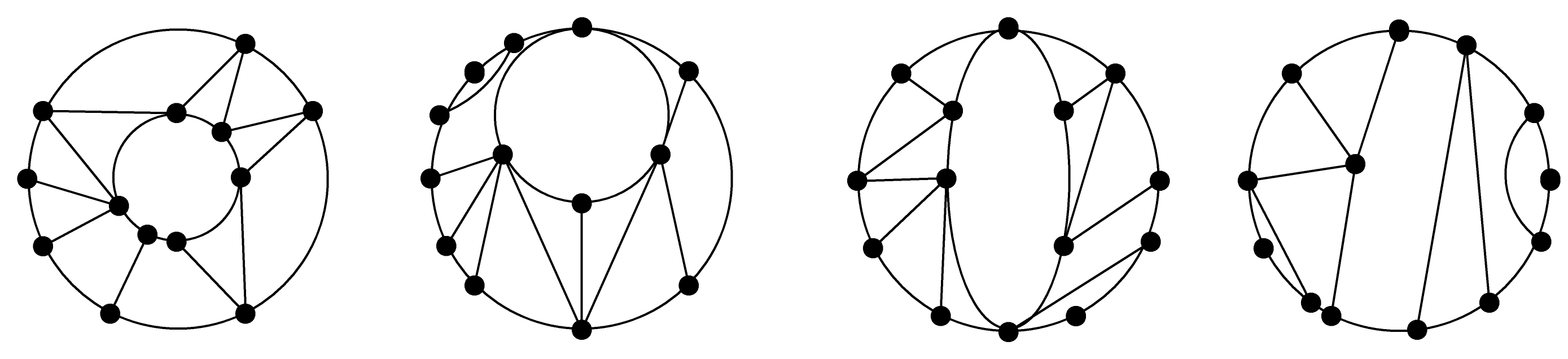}
    \caption{2-boundary planar graphs}
    \label{fig:2bounday}
\end{figure}

In this paper, we verify Conjecture \ref{conj} for 2-boundary planar graphs by the following theorem.
The bound 5 is sharp as $C_5$ is 2-boundary planar and $\chi_o(C_5)=5$.
\begin{thm}\label{thm-2}
Every 2-boundary planar graph has an odd 5-coloring.
\end{thm}

\noindent \textbf{Origination:} In Section \ref{sec:2}, we begin by delineating the structures of 2-boundary planar graphs. Subsequently, in Section \ref{sec:3}, we introduce the concept of odd non-$k$-colorable minimal graphs. We then proceed to prove Theorem \ref{thm-2} by demonstrating that every configuration derived in Section \ref{sec:2} is odd $k$-reducible (see its definition in Section \ref{sec:3}) for all integers $k\geq 5$. In Section \ref{sec:4}, we offer some concluding observations. In particular, we propose several intriguing problems related to 2-boundary planar graphs, which hold independent significance.

\section{Structural properties}\label{sec:2}

For a graph $G$ and a vertex set $S\subseteq V(G)$, $G-S$ denotes the graph obtained from $G$ by removing all vertices of $S$.
If $S=\{v\}$, then we write $G-v$ instead of $G-\{v\}$.
For a graph $G$ and two vertices $u,w\in V(G)$, $G+uw$ denotes the graph obtained from $G$ by connecting $u$ with $w$ with an edge if $uw\not\in E(G)$, or denotes $G$ itself otherwise.

In this section, we focus on 2-boundary planar graphs embedded in the plane without crossings and all vertices lie on the boundaries of exactly two faces\footnote{When there is just one boundary face, the graphs are outerplanar, and Theorem \ref{thm-2} applies based on the findings of Caro, Petru{\v{s}}evski, and {\v{S}}krekovski \cite{zbMATH07585605}. So, this is not the case we are focusing on.}, one of which is unbounded and the other is bounded.
We call the  unbounded face \textit{outer face} and the bounded face \textit{inside face}, denoted by $f_{out}$ and $f_{in}$ respectively. Other faces are called \textit{interior faces}.
By $V(f_{out})$ (resp.\,$V(f_{in})$) we denote the set of vertices on $f_{out}$ (resp.\,$f_{in}$).
Note that it is possible that $V(f_{out})\cap V(f_{in})\neq \emptyset$ (see the last three pictures of Figure \ref{fig:2bounday}). If this occurs, we call vertices in $V(f_{out})\cap V(f_{in})$
\textit{shared vertices}. Let
\[\tilde{\Delta}(G)=\max\bigg\{d_G(u)~\big|~u\in V(G)\setminus \big(V(f_{out})\cap V(f_{in})\big)\bigg\}.\]

For an edge $uv\in E(G)$ such that $\{u,v\}\subseteq V(f_{out})$ or $\{u,v\}\subseteq V(f_{in})$, it is a \textit{chordal edge} if $u$ and $v$ are not consecutive on $f_{out}$ or $f_{in}$, respectively, and \textit{boundary edge} otherwise.
An edge $uv\in E(G)$ is an \textit{interconnected edge} if $u\in V(f_{out})$, $v\in V(f_{in})$, and $\{u,v\}\cap V(f_{out})\cap V(f_{in})=\emptyset$.
%A path $u_1u_2\cdots u_t$ is a \textit{boundary path} if each $u_iu_{i+1}$ with $1\leq i\leq t-1$ is a boundary edge.

A \textit{$k$-vertex} is a vertex $v\in V(G)$ with $d_G(v)=k$. A vertex with odd (resp.\,even)  degree is an \textit{odd vertex} (resp.\,\textit{even vertex}). A neighbor of a vertex is \textit{odd neighbor} if it is an odd vertex.  

A \textit{$k$-fan} with center $v$ is a graph on vertices $v,u_0,\ldots,u_{k-1}$ such that $vu_i$ and $u_iu_{i+1}$ are edges for each possible  $i$, denoted by 
$F[v;u_0\cdots u_{k-1}]$.

\begin{lem}\label{lem-2}
Every $2$-boundary planar graph $G$ contains one of the following configurations unless 
$G$ is isomorphic to $P$ as described in Figure \ref{fig:MP}:
\begin{enumerate}[label={\rm \textbf{(\alph*$\ref{lem-2}$)}}]\setlength{\itemsep}{-3pt}
\item \label{a2} a vertex $v$ of degree $1$;
\item \label{b2} a triangle $uvwu$ with $d_G(v)=2$; 
\item \label{c2} a path $uvw$ with $d_G(v)=2$ and $uw\not\in E(G)$ such that $G-v+uw$ is $2$-boundary planar;
\item \label{d2} a $3$-vertex $v$ with two odd neighbors;

\item \label{e13} 
a triangle $vxyv$ such that $d_G(v)=4$, $d_G(x)=d_G(y)=3$, and all neighbors of $v$ are odd; %the vertices in $N_G(v)\setminus \{u_1,u_2\}$ are odd vertices;
\item \label{h2} a triangle $vxyv$ adjacent to two triangles $uxvu$ and $wvyw$ such that $u\neq w$, $d_G(v)=d_G(x)=d_G(y)=4$, and $d_G(u)=3$; 
\item \label{k21} two adjacent triangles $xuvx$ and $yuvy$ with  $d_G(u)=d_G(v)=3$; 
\item \label{k2} two adjacent triangles $xuvx$ and $yuvy$ with  $d_G(u)=4$ and $d_G(v)=d_G(x)=3$; 
\item \label{g2} two adjacent triangles $xuvx$ and $yuvy$ with  $d_G(u)=d_G(v)=4, d_G(x)=3$ and $d_G(y)$ being odd;

\item \label{i2} a $5$-fan $F[v;u_0u_1u_2u_3u_4]$ such that
$d_G(v)=5$, $d_G(u_1)=d_G(u_3)=4$, and $d_G(u_2)=3$;

\item \label{e11} a $5$-fan $F[v;u_0u_1u_2u_3u_4]$ such that $d_G(v)=6, d_G(u_1)=4$, and $d_G(u_2)=d_G(u_3)=3$;

\item \label{e12} two $5$-fans $F[v;u_0u_1u_2u_3u_4]$ and $F[u_1;u_2vu_0xy]$ such that $d_G(v)=d_G(u_1)=6$, $d_G(u_0)=d_G(x)=d_G(u_2)=d_G(u_3)=3$, and the vertex in $N_G(v)\setminus \{u_0,u_1,u_2,u_3,u_4\}$ is an odd vertex;

\end{enumerate}
\begin{figure}[htp]
    \centering
    \includegraphics[width=4cm]{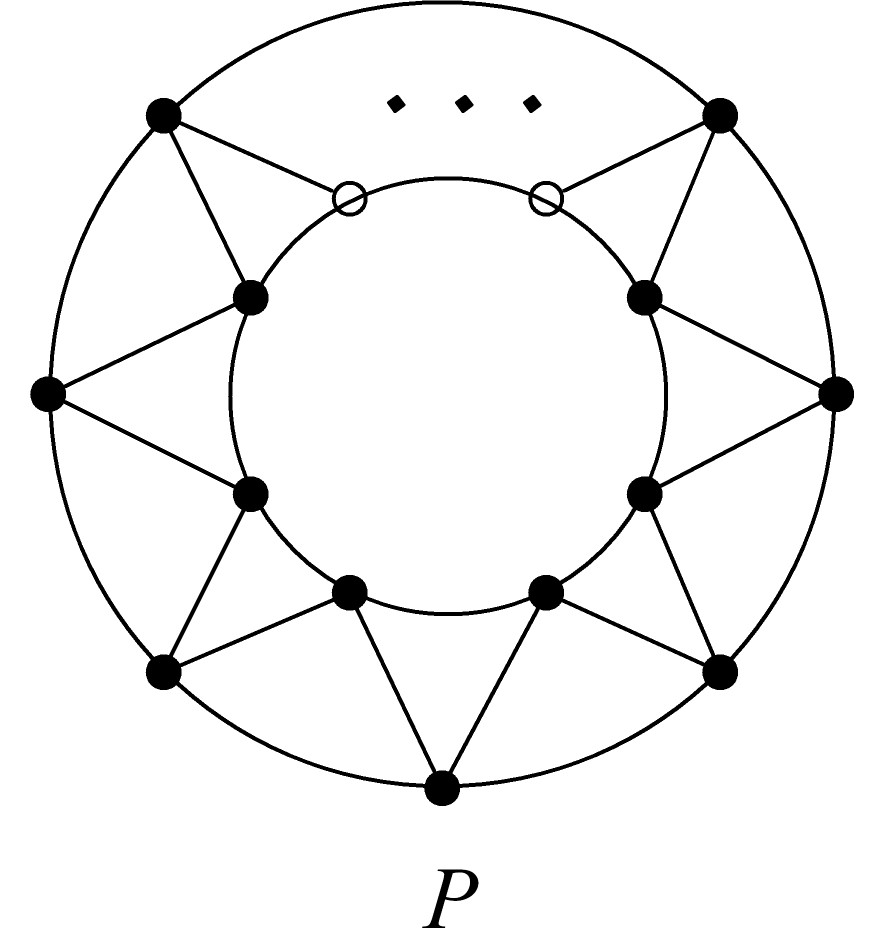}
    \caption{4-regular 2-boundary planar graphs}
    \label{fig:MP}
\end{figure}

\end{lem}

\begin{proof}
Suppose by contradicting that $G$ is a $2$-boundary planar graph such that it contains none of the above configurations and $|G|$ is as small as possible.

If $G$ is an outerplanar graph, then $G$ contains \ref{a2}, \ref{b2}, or \ref{c2} (see \cite{zbMATH00790475,dmtcs:8381}), a contradiciton.
Hence $G$ has an outer face $f_{out}$ and an inside face $f_{in}$.

Clearly, $G$ is connected and $\delta(G)\geq 2$ by the absence of \ref{a2}.

\begin{clm}\label{claim0}
$\delta(G)\geq 3$.
\end{clm}

\proof 
Let $v$ be a $2$-vertex in $G$ such that $N_G(v)=\{u,w\}$. If $v$ is a shared vertex, then $uv$ and $vw$ are boundary edges and $uw\not\in E(G)$. This implies the occurrence of \ref{c2}. If $v$ is not a shared vertex, then
assume w.l.o.g.\,that $v\in  V(f_{out})\setminus V(f_{in})$. 
Now $u,w\in V(f_{out})$. If $uw\in E(G)$, then \ref{b2} occurs. If $uw\not\in E(G)$, then
 $G-v+uw$ is 2-boundary planar and thus \ref{c2} occurs. Hence 
 $\delta(G)\geq 3$. \hfill$\square$

\begin{clm}\label{claim1}
$G$ does not have any chordal edge.
\end{clm}

\proof If $G$ has a chordal edge, then choose one, say $uv$, such that there is no other chordal edge $xy$ such that $u,x,y,v$ lies in this ordering on $f_{out}$ (or $f_{in}$). This implies that there is a $2$-vertex $z$ such that $uz\in E(G)$, contradicting Claim \ref{claim0}. \hfill$\square$

% $zuvz$ forms a triangle, or there are two $2$-vertices $z_1,z_2$ such that $uz_1z_2$ forms a path and $uz_2$ is a non-edge. The former case implies the appearance of \ref{b2} and the later case implies the appearance of \ref{c2}.
% Hence $G$ does not have any chordal edge. 

\begin{clm}\label{claim-shared}
If $v$ is a shared vertex then $d_G(v)\leq 4$.
\end{clm}

\proof Assume w.l.o.g.\,$v\in V(f_{out})\cap V(f_{in})$. By Claim \ref{claim1}, $v$ has at most two neighbors in $V(f_{out})$ and at most two neighbors in $V(f_{in})$. Hence $d_G(v)\leq 2+2=4$. \hfill$\square$

\begin{clm}\label{claim2}
If $v\in  V(f_{out})\setminus V(f_{in})$ (resp.\,$v\in  V(f_{in})\setminus V(f_{out})$) and $u_1,u_2,\ldots,u_{d-2}$ $(d\geq 4)$ are all neighbors of $v$ in $V(f_{in})$ (resp.\,$V(f_{out})$) lying in this ordering on $V(f_{in})$ (resp.\,$V(f_{out})$), then $u_iu_{i+1}$ is a boundary edge for each $1\leq i\leq d-3$, and moreover, $d_G(u_i)=3$ for each $2\leq i\leq d-3$.
\end{clm}

\proof
Suppose that there is some vertex $w$ such that $u_i,w,u_{i+1}$ lie in this ordering on $f_{in}$ (resp.\,$f_{out}$) for some $1\leq i\leq d-3$.
Since $G$ does not have any chordal edge by Claim \ref{claim1}, $w$ has at most two neighbors in $V(f_{in})$ (resp.\,$V(f_{out})$). Further, since $d_G(w)\geq 3$ by Claim \ref{claim0}, $w$ has at least one neighbor in $V(f_{out})$ (resp.\,$V(f_{in})$). Since no edge is crossed in $G$, the neighbor of $w$ in $V(f_{out})$ (resp.\,$V(f_{in})$) must be $v$. This is a contradiction as $w$ is not a neighbor of $v$.

So $u_i$ and $u_{i+1}$ are consecutive on $V(f_{in})$ (resp.\,$V(f_{out})$) for each $1\leq i\leq d-3$.
If $u_iu_{i+1}\not\in E(G)$, then $u_ivu_{i+1}$ would be a path on $V(f_{in})$ (resp.\,$V(f_{out})$). This implies $v\in V(f_{in})$ (resp\,$v\in V(f_{out})$), a contradiction. Hence $u_iu_{i+1}\in E(G)$, i.e., $u_iu_{i+1}$ is a boundary edge.
It gives $N_G(u_i)=\{u_{i-1},u_{i+1},v\}$ for each $2\leq i\leq d-3$, implying $d_G(u_i)=3$.\hfill$\square$

\vspace{2mm} Now we divide the main proof into a series of cases.

\vspace{2mm}\textbf{Case 1:} \textit{$\tilde{\Delta}(G)\geq 6$.}\vspace{2mm}

 Choose w.l.o.g.\,a vertex $v\in V(f_{out})\setminus V(f_{in})$ such that $d_G(v)=\tilde{\Delta}(G):=d\geq 6$.
Since $G$ does not have any chordal edge, there are $d-2$ neighbors of $v$, say $u_1,u_2,\ldots,u_{d-2}$, in $V(f_{in})\setminus V(f_{out})$.
We assume that $u_1,u_2,\ldots,u_{d-2}$ lie in this ordering on $f_{in}$. Note that by our assumption none of $u_i$ is a shared vertex.

By Claim \ref{claim2}, $d(u_i)=3$ for each $2\leq i \leq d-3$. 
If $d\geq 7$, then $u_3$ is a $3$-vertex with two odd neighbors $u_2$ and $u_4$,  and thus a configuration \ref{d2} appears. Hence $d=6$.
Similarly, $u_1$ is an even vertex as otherwise $u_2$ is a $3$-vertex with two odd neighbors $u_1$ and $u_3$, implying the appearance of \ref{d2}. 
By Claims \ref{claim0} and \ref{claim-shared}, 
$u_1$ has degree 4 or 6. By symmetry, $u_4$ has degree 4 or 6, too.

If $d_G(u_1)=4$, then there is a vertex $u_0\in V(f_{out})$ such that $u_0u_1\in E(G)$. 
If $vu_0\not\in E(G)$, then by Claim \ref{claim1} there would a $2$-vertex $w$ such that $vw\in E(G)$, contradicting Claim \ref{claim0}.
Hence $vu_0\in E(G)$ and a $5$-fan $F[v;u_0u_1u_2u_3u_4]$ as depicted by \ref{e11} appears.
By symmetry, if $d_G(u_4)=4$ then another copy of \ref{e11} appears in $G$. Hence $d_G(u_1)=d_G(u_4)=6$.

By Claims \ref{claim1} and \ref{claim2}, there are vertices $u_0,x,y\in V(f_{out})$ such that $vu_1xy$ is a path with $d_G(u_0)=d_G(x)=3$.
By symmetry, there is a vertex $z\in V(f_{out})$ such that $vz,zu_4\in E(G)$ and $d_G(z)=3$. Hence two $5$-fans
$F[v;u_0u_1u_2u_3u_4]$ and $F[u_1;u_2vu_0xy]$ as depicted by \ref{e12} appears.

\vspace{2mm}\textbf{Case 2:} \textit{$\tilde{\Delta}(G)=5$.}\vspace{2mm}

 Choose w.l.o.g.\,a vertex $v\in V(f_{out})\setminus V(f_{in})$ such that $d_G(v)=5$.
Since $G$ does not have any chordal edge by Claim \ref{claim1}, there are three neighbors of $v$, say $u_1,u_2,u_{3}$, in $V(f_{in})\setminus V(f_{out})$.
We assume that $u_1,u_2,u_{3}$ lie in this ordering on $V(f_{in})$. By Claim \ref{claim2}, $u_1u_2u_3$ is a path on $f_{in}$ and $d_G(u_2)=3$. 
By the absence of the configuration \ref{d2}, $d_G(u_1)$ and $d_G(u_3)$ are even. Since $u_1,u_3\not\in V(f_{out})$, they are not shared vertices and thus
$d_G(u_1)=d_G(u_3)=4$ as $\tilde{\Delta}(G)=5$.

Since chordal edges are forbidden in $G$, $u_1$ or $u_3$ has a neighbor $u_0$ or $u_4$ on $V(f_{out})\setminus V(f_{in})$
such that $vu_0$ and $vu_4$ are both boundary edges on $f_{out}$
by Claim \ref{claim2}.  Hence a 5-fan $F[v;u_0u_1u_2u_3u_4]$ as depicted by \ref{i2} appears.

\vspace{2mm}\textbf{Case 3:} \textit{$\tilde{\Delta}(G)=4$.}\vspace{2mm}

Choose w.l.o.g.\,a vertex $v\in V(f_{out})\setminus V(f_{in})$ such that $d_G(v)=4$.
Since $G$ does not have any chordal edge by Claim \ref{claim1}, there are two neighbors of $v$, say $u_1,u_2$, in $V(f_{in})\setminus V(f_{out})$. By Claim \ref{claim2}, $u_1u_2$ is a boundary edge on $f_{in}$. 

We claim that it is impossible that $d_G(u_1)= 3$ and $d_G(u_2)= 3$. Otherwise, let $\{v_1\}= N_{G}(u_1)\setminus \{v,u_2\}$ and 
$\{v_2\}= N_{G}(u_2)\setminus \{v,u_1\}$. If $v_1\in V(f_{out})$, then $vv_1$ is a boundary edge by Claim \ref{claim1}, and thus \ref{k2} appears.
Hence $v_1\in V(f_{in})\setminus V(f_{out})$ and by symmetry $v_2\in V(f_{in})\setminus V(f_{out})$. It follows that 
$v_1$ and $v_2$ are not shared vertices. Hence $d_G(v_1)=d_G(v_2)=4$ by Claim \ref{claim0} and by the absence of the configuration \ref{d2}.
By Claim \ref{claim1} , for each $i=1,2$, $v_i$ has two neighbors $x_i$ and $w_i$ on $f_{out}$ such that $x_1w_1vw_2x_2$ is a path of $G$.
It follows that $d_G(w_1)=d_G(w_2)=3$ and thus \ref{e13} appears.

Since $u_1,u_2\not\in V(f_{out})$, they are not shared vertices and thus $d_G(u_1),d_G(u_2)\leq \tilde{\Delta}(G)$.
It follows  $d_G(u_1)=4$ or $d_G(u_2)=4$.

We now assume by symmetry that $d_G(u_2)=4$.
Since chordal edges are forbidden in $G$, there is another vertex besides $v$, say $w_2$, acting as a neighbor of $u_2$ on $V(f_{out})\setminus V(f_{in})$. By Claim \ref{claim2}, $vw_2$ is a  boundary edge on $f_{out}$.

If $d_G(u_1)=3$ and $d_G(w_2)$ is odd, then the configuration \ref{g2} appears.

If $d_G(u_1)=3$ and $d_G(w_2)$ is even, then $d_G(w_2)=4$ as $w_2$ is not a shared vertex. So there is a vertex $u_3$ on $V(f_{in})\setminus V(f_{out})$ such that $w_2u_3\in E(G)$. By Claim \ref{claim2}, $u_2u_3$ is a  boundary edge on $f_{in}$, and thus the configuration \ref{h2} appears.

If $d_G(u_1)=4$, then there is a vertex $w_0$ on $V(f_{out})\setminus V(f_{in})$ such that $w_0u_1\in E(G)$. 
By Claim \ref{claim2}, $w_0v$ is a  boundary edge on $f_{out}$. If $d_G(w_2)=3$, then the configuration \ref{h2} appears.
Hence $d_G(w_2)=4$. So there is a vertex $u_3$ on $V(f_{in})\setminus V(f_{out})$ such that $w_2u_3\in E(G)$. By Claim \ref{claim2}, $u_2u_3$ is a boundary edge on $f_{in}$. If $d_G(u_3)= 3$, then the configuration \ref{h2} appears. Hence $d_G(u_3)= 4$.

Let $w_1:=v$. For each $i\geq 1$,
we repeat the above arguments via letting $w_{i+1}, u_{i+1}, u_{i+2}$ play the roles of $w_i, u_{i}, u_{i+1}$, respectively, and would find two non-shared $4$-vertices $w_{i+2}$ and $u_{i+3}$ such that $w_{i+2}u_{i+2}, w_{i+2}u_{i+3} \in E(G)$ and $w_{i+1}w_{i+2}, u_{i+2}u_{i+3}$ are boundary edges unless \ref{h2} occurs.
Since the graph $G$ is finite, if we forbid the configuration \ref{h2} then 
this iterative process would finally end without finding any new vertex only if $G\cong P$.

\vspace{2mm}\textbf{Case 4:} \textit{$\tilde{\Delta}(G)=3$.}\vspace{2mm}

Choose w.l.o.g.\,a vertex $v\in V(f_{out})\setminus V(f_{in})$ such that $d_G(v)=3$.
Since $G$ does not have any chordal edge by Claim \ref{claim1}, there is one neighbor of $v$, say $u$, in $V(f_{in})\setminus V(f_{out})$.
As $u$ is not a shared vertex, $d_G(u)=3$ by Claim \ref{claim0}.
Let $\{x,y\}\subseteq N_G(u)\setminus \{v\}$. Clearly, $x$ and $y$ are on $f_{in}$.

If $x$ is not a shared vertex, then $3\leq d_{G}(x)\leq \tilde{\Delta}(G)=3$ by Claim \ref{claim0}. 
This implies that the 3-vertex $u$ has two odd neighbors $v$ and $x$ and then \ref{d2} occurs.
Hence $x$ is a shared vertex and so does $y$ by symmetry. It follows that $x,y\in V(f_{out})$ and therefore 
$vx,vy$ are boundary edges on $f_{out}$ by Claim \ref{claim1}. Now, two adjacent triangles $xuvx$ and $yuvy$ as  depicted by \ref{k21} appears.
\end{proof}

\section{Proof of Theorem \ref{thm-2}}\label{sec:3}

For two graphs $H$ and $G$, we write $H< G$ if $|V(H)|+|E(H)|<|V(G)|+|E(G)|$.
Let $\mathcal{G}$ be a class of graphs $G$ such that $H\in \mathcal{G}$ for every subgraph $H$ of $G$.
An \textit{odd non-$k$-colorable minimal graph} (w.r.t.\,$\mathcal{G}$) is a graph $G\in \mathcal{G}$ such that 
\begin{itemize}\setlength{\itemsep}{-3pt}
    \item $\chi_o(G)>k$ and 
    \item $\chi_o(H)\leq k$ for every graph $H\in \mathcal{G}$ with $H<G$.
\end{itemize}
A configuration is \textit{odd $k$-reducible in $\mathcal{G}$} if it cannot occur in an odd non-$k$-colorable minimal graph (w.r.t.\,$\mathcal{G}$).

% For an odd $k$-coloring $\tau$ of some subgraph $H$ of $G$ and for a vertex $v\in V(H)$, let $\tau_o(v)$ denote the unique color that appears an odd number of times on $N_H(v)$ if such a color exists; otherwise, we set $\tau_o(v)=0$. Note that $\tau_o(v)\not=0$ implies $d_G(v)$ is odd.

Let $H,G\in \mathcal{G}$ with $H< G$ and $V(H)\subseteq V(G)$.
For a coloring $\tau$ of $H$ and for a vertex $v\in V(G)$ (note that $v$ may not be colored at this moment), 
if there is a unique color $i$ such that the size of $\{u\in N_H(v)~|~\varphi(u)=i\}$ is odd, then we set 
$\tau_o(v)=i$, and otherwise we set $\tau_o(v)=0$.
Note that $\tau_o(v)\not=0$ implies $d_H(v)$ is odd.

For a set $S$ of vertices we denote 
$\tau(S)=\{\tau(u)~|~u\in S\}$ and $\tau_o(S)=\{\tau_o(u)~|~u\in S\}$. 
We say a color $i$ is odd or even on $S$ if the size of $\{u\in S~|~\tau(u)=i\}$ is odd or even.

Given an odd $k$-coloring $\tau$ of $H$, if we extend it to $G$ then the set of colors that cannot be used by a being colored vertex $v\in V(G)\setminus V(H)$ is therefore
\[F^{\tau}(v)=[k]\cap \bigg(\tau(N_H(v))\cup \tau_o(N_H(v))\bigg).\] Formally, the set $F^{\tau}(v)$ is called the \textit{forbidden set} for $v$, and the set \[A^{\tau}(v)=[k]\setminus F^{\tau}(v)\] 
is called the \textit{available set} for $v$. 

Note that we shall always check the \textit{oddness of $v$}  (i.e.\,whether there is some color that is odd on the neighborhood of $v$) after the extension.

\begin{lem}\label{cl-1}
A vertex $v$ of degree 1 is odd $k$-reducible in $\mathcal{G}$ for every integer $k\geq 3$.
\end{lem}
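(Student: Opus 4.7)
The plan is a short minimality argument. I would suppose for contradiction that some odd non-$k$-colorable minimal graph $G\in\mathcal{G}$ contains a vertex $v$ of degree one, with unique neighbor $u$, and consider $H:=G-v$. Since $\mathcal{G}$ is closed under taking subgraphs and $|V(H)|+|E(H)|<|V(G)|+|E(G)|$, the minimality of $G$ furnishes an odd $k$-coloring $\tau$ of $H$, which I will extend to $G$ by choosing a color for $v$; producing such an extension contradicts $\chi_o(G)>k$.

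For the extension, observe that the only colors $v$ must avoid are those in $F^{\tau}(v)=\{\tau(u)\}\cup(\{\tau_o(u)\}\cap[k])$: the first element guarantees properness of the edge $uv$, while the second (when $\tau_o(u)\neq 0$) preserves the unique odd-occurring color of $N_H(u)$ after attaching $v$ to it. Since $|F^{\tau}(v)|\leq 2$ and $k\geq 3$, the set $A^{\tau}(v)=[k]\setminus F^{\tau}(v)$ is non-empty, so any color $c\in A^{\tau}(v)$ may be assigned to $v$.

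It then remains only to verify the oddness condition in the extended coloring. Oddness at $v$ is automatic since $N_G(v)=\{u\}$ contributes exactly one occurrence of $\tau(u)$. For oddness at $u$, either $N_H(u)$ already has at least two odd-occurring colors, in which case adding a single occurrence of $c$ can spoil at most one of them, or $\tau_o(u)$ is the unique such color, in which case the constraint $c\neq\tau_o(u)$ keeps its count odd (and simultaneously pushes the count of $c$ itself to odd in $N_G(u)$). There is no real obstacle here — this lemma functions as a base case for the more involved reducibility arguments later in Section \ref{sec:3} — so the only thing to be careful about is correctly reading off the forbidden set from the paper's conventions, in particular the interpretation of $\tau_o(u)=0$ as an empty contribution to $F^{\tau}(v)$.
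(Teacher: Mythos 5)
Your proposal is correct and follows essentially the same route as the paper: delete $v$, invoke minimality of $G$ to get an odd $k$-coloring of $G-v$, and extend by picking a color outside $F^{\tau}(v)=[k]\cap\{\tau(u),\tau_o(u)\}$, which has at most two elements. The only (harmless) omission is the case where no color appears an odd number of times on $N_{G-v}(u)$, but there the new color $c$ itself supplies the odd-occurring color at $u$, exactly as your parenthetical remark already indicates.
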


\begin{proof}
If $G$ is an odd non-$k$-colorable minimal graph containing such a configuration, then 
$G':=G-v$  has an odd $k$-coloring $\tau$. Let $u\in N_G(v)$. 
We extend $\tau$ to $G$ by coloring $v$ with a color in $[k]\setminus F^{\tau}(v)$. 
This is possible since $F^{\tau}(v)=[k]\cap \{\tau(u),\tau_o(u)\}$.
\end{proof}

\begin{lem}\label{cl-2}
A triangle $uvwu$ with $d_G(v)=2$
is odd $k$-reducible in $\mathcal{G}$ for every integer $k\geq 5$.
\end{lem}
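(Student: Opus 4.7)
My proof plan for Lemma~\ref{cl-2} follows the same pattern as Lemma~\ref{cl-1}. I would assume for contradiction that $G$ is an odd non-$k$-colorable minimal graph in $\mathcal{G}$ containing the triangle $uvwu$ with $d_G(v)=2$, and form $G'=G-v$. Since $\mathcal{G}$ is subgraph-closed and $|V(G')|+|E(G')|<|V(G)|+|E(G)|$, minimality yields an odd $k$-coloring $\tau$ of $G'$, and I would attempt to extend $\tau$ to $G$ by assigning a suitable color to $v$.

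The forbidden set for $v$ is
\[F^{\tau}(v)=[k]\cap\{\tau(u),\tau(w),\tau_o(u),\tau_o(w)\},\]
so $|F^{\tau}(v)|\leq 4<k$ when $k\geq 5$, and one can pick $\tau(v)\in [k]\setminus F^{\tau}(v)$. I would then verify the oddness conditions for $v$, $u$ and $w$. Because $uw\in E(G)$ and $\tau$ is proper, $\tau(u)\neq\tau(w)$; hence on $N_G(v)=\{u,w\}$ each of the two colors appears exactly once, making the oddness of $v$ automatic. For $u$: if $\tau_o(u)\neq 0$ in $G'$, then excluding $\tau(v)=\tau_o(u)$ keeps $\tau_o(u)$ as an odd-count color on $N_G(u)$; whereas if $\tau_o(u)=0$ in $G'$, then every color has even count on $N_{G'}(u)$, so any choice of $\tau(v)$ forces that color to have odd count on $N_G(u)$. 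The symmetric argument applies to $w$, completing the verification.

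I do not anticipate any real difficulty: the configuration is small, the chord $uw$ makes the oddness of $v$ free of charge, and the counting bound $|F^{\tau}(v)|\leq 4<5\leq k$ is loose. The only point worth flagging when writing it up is that both $\tau_o(u)$ and $\tau_o(w)$ must appear in the forbidden set in order to preserve the oddness of $u$ and $w$ after inserting the new neighbor $v$.
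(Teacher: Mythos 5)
Your proposal is correct and follows essentially the same argument as the paper: delete $v$, take an odd $k$-coloring of $G'=G-v$, and color $v$ from $[k]\setminus F^{\tau}(v)$ where $F^{\tau}(v)=[k]\cap\{\tau(u),\tau(w),\tau_o(u),\tau_o(w)\}$ has size at most $4<k$, with the chord $uw$ guaranteeing the oddness of $v$ for free. (One tiny imprecision: $\tau_o(u)=0$ can also occur when two or more colors appear an odd number of times on $N_{G'}(u)$, not only when all counts are even, but in that case at most one of them is spoiled by the new neighbor $v$, so the oddness of $u$ still survives.)
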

\begin{proof}
If $G$ is an odd non-$k$-colorable minimal graph containing such a configuration, then 
$G':=G-v$ has an odd $k$-coloring $\tau$. 
Since $F^{\tau}(v)=[k]\cap \{\tau(u),\tau(w),\tau_o(u),\tau_o(w)\}$, we can color 
$v$ with a color in $[k]\setminus F^{\tau}(v)$. This results in an odd $k$-coloring of $G$
\end{proof}

\begin{lem}\label{cl-3}
A path $uvw$ with $d_G(v)=2$ and $uw$ being a non-edge is odd $k$-reducible in $\mathcal{G}$ for every integer $k\geq 5$ if $G-v+uw\in \mathcal{G}$.
\end{lem}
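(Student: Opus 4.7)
The plan is to mirror the strategy of Lemma \ref{cl-2}: suppose for contradiction that $G \in \mathcal{G}$ is an odd non-$k$-colorable minimal graph containing this configuration. Since $G' := G-v+uw$ lies in $\mathcal{G}$ by hypothesis and satisfies $|V(G')|+|E(G')| = |V(G)|+|E(G)|-2$ (one vertex and two incident edges removed, one new edge added), minimality yields an odd $k$-coloring $\tau'$ of $G'$. I would then transport $\tau'$ unchanged to $V(G)\setminus\{v\}$ and try to assign $\tau(v)=c$ for a suitable color $c \in [k]$. Properness at $v$ demands $c \notin \{a,b\}$, where $a:=\tau'(u)$ and $b:=\tau'(w)$ are automatically distinct because $uw \in E(G')$ and $\tau'$ is proper. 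Given $a \neq b$, the two neighbors of $v$ receive distinct colors each appearing once in $N_G(v)$, so oddness at $v$ is automatic. Every vertex $x \notin \{u,v,w\}$ satisfies $N_G(x)=N_{G'}(x)$ with unchanged colors, so its oddness is inherited from $\tau'$.

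The essential step is to keep $u$ and $w$ odd under their modified neighborhoods. Since $N_G(u) = (N_{G'}(u) \setminus \{w\}) \cup \{v\}$ and $b \neq c$, the multiset of colors on $N_G(u)$ differs from that on $N_{G'}(u)$ by flipping the parity of color $b$ and of color $c$. Letting $O_u$ denote the (nonempty) set of colors appearing an odd number of times in $N_{G'}(u)$ under $\tau'$, the set of odd colors in $N_G(u)$ equals $O_u \triangle \{b,c\}$, which is empty only when $O_u = \{b,c\}$. Hence at most one value of $c$ makes $u$ fail oddness: the unique element of $O_u \setminus \{b\}$ in the borderline case $|O_u|=2$ with $b \in O_u$, and no bad $c$ otherwise. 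A symmetric analysis for $w$ gives at most one further oddness-bad color.

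Altogether the forbidden set for $c$ has size at most $4$: the two colors $a,b$ coming from properness at $v$ and at most one oddness-bad color each contributed by $u$ and $w$. Since $k \geq 5$, some $c \in [k]$ remains, yielding an odd $k$-coloring of $G$ and the desired contradiction. The main obstacle relative to Lemmas \ref{cl-1} and \ref{cl-2} is the \emph{simultaneous} parity flip at $u$ and $w$ (each loses a neighbor and gains $v$), which means the single-parity-flip formalism $F^\tau(v) = [k] \cap (\tau(N_H(v)) \cup \tau_o(N_H(v)))$ does not apply verbatim; the symmetric-difference computation replaces it, but the ultimate forbidden-set bound of $4$ still matches that of Lemma \ref{cl-2}, so the threshold $k \geq 5$ is the same.
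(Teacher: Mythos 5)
Your proposal is correct and uses the same reduction as the paper: pass to $G'=G-v+uw$, which lies in $\mathcal{G}$ and is smaller in the measure $|V|+|E|$, take an odd $k$-coloring, and extend it by choosing a color $c$ for $v$. Where you diverge is in how the good color is found. The paper splits into three cases according to whether $\tau_o(u)=\tau(w)$ and $\tau_o(w)=\tau(u)$ and exhibits an explicit color in each case; you instead give a uniform count: since $N_G(u)=(N_{G'}(u)\setminus\{w\})\cup\{v\}$, the set of colors of odd multiplicity at $u$ becomes $O_u\,\triangle\,\{\tau(w),c\}$, which is empty for at most one value of $c$ (namely when $O_u=\{\tau(w),c\}$), and symmetrically for $w$; together with the two properness constraints this forbids at most $4<k$ colors. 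Your remark that the single-parity-flip formalism $F^{\tau}$ does not apply verbatim here is precisely the reason the paper resorts to a case analysis, and your symmetric-difference bookkeeping handles the double flip more transparently while arriving at the same threshold $k\geq 5$. Both arguments are sound; yours is somewhat cleaner and would adapt readily to other reductions in which two vertices simultaneously exchange a neighbor.
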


\begin{proof}
If $G$ is an odd non-$k$-colorable minimal graph containing such a configuration, then  $G':=G-v+uw$ admits an odd $k$-coloring $\tau$ as $G'\in \mathcal{G}$.
Clearly, $\tau(u)\neq \tau(w)$. Assume w.l.o.g. that
$\tau(u)=1$ and $\tau(w)=2$.
Note that $N_{G'}(u)\setminus \{w\} = N_G(u)\setminus \{v\}$ and $N_{G'}(w)\setminus \{u\} =N_G(w)\setminus \{v\}$.
We divide the proofs into three cases by symmetry.

\vspace{2mm}\textbf{Case 1.}  $\tau_o(u)=2$ and $\tau_o(w)=1$

\vspace{2mm} Now every color in $\{3,4,\ldots,k\}$ is even on both $N_G(u)\setminus \{v\}$ and $N_G(w)\setminus \{v\}$.
Hence we color $v$ with 3 to obtain  an odd $k$-coloring of $G$.

\vspace{2mm}\textbf{Case 2.}  $\tau_o(u)=2$ and $\tau_o(w)\not=1$

\vspace{2mm} Similarly, every color in $\{3,4,\ldots,k\}$ is even on $N_G(u)\setminus \{v\}$.
If $\tau_o(w)=0$, then there is a color different from 1, say 3, that is odd on $N_G(w)\setminus \{v\}$.
If $\tau_o(w)\neq 0$, then assume w.l.o.g. that $\tau_o(w)=3$.
In each case we color $v$ with 4 to obtain  an odd $k$-coloring of $G$.

\vspace{2mm}\textbf{Case 3.}  $\tau_o(u)\not=2$ and $\tau_o(w)\not=1$

\vspace{2mm} 
If $\tau_o(u)\not=0$ (assume w.l.o.g.\,$\tau_o(u)=3$), then the colors $2$ and $3$ are odd on $N_G(u)\setminus \{v\}$.
If $\tau_o(u)=0$, then there is a color different from 1 and 2 that is odd on $N_G(u)\setminus \{v\}$.
Hence in each case we find a color $a\neq 1,2$ that is odd on $N_G(u)\setminus \{v\}$.
Similarly, we can find a color $b\neq 1,2$ that is odd on $N_G(w)\setminus \{v\}$.
So we color $v$ with a color in $\{3,4,\ldots,k\}\setminus \{a,b\}$ to obtain an odd 5-coloring of $G$. 
\end{proof}

\begin{lem}\label{cl-5}
A $3$-vertex $v$ with two odd neighbors is odd $k$-reducible in $\mathcal{G}$ for every integer $k\geq 5$.
\end{lem}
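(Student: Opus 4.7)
The approach is the standard reducibility template exemplified in Lemmas \ref{cl-1}--\ref{cl-3}. Assume, for contradiction, that $G$ is an odd non-$k$-colorable minimal graph in $\mathcal{G}$ containing a $3$-vertex $v$ with $N_G(v)=\{u_1,u_2,u_3\}$, where $d_G(u_1)$ and $d_G(u_2)$ are odd. By minimality, $G'=G-v$ admits an odd $k$-coloring $\tau$, and the task is to extend $\tau$ to $G$ by assigning some color to $v$.

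The key input is the following parity observation: for any graph $H$ and any proper coloring, the number of colors appearing an odd number of times on $N_H(u)$ has the same parity as $d_H(u)$. Applying this in $G$ at the odd-degree vertices $v$, $u_1$, and $u_2$ shows that the oddness condition is automatic at these three vertices as soon as the extension is a proper coloring. Applying it in $G'$ at $u_1$ and $u_2$, whose degrees there are the even values $d_G(u_i)-1$, shows that the number of odd-appearing colors on $N_{G'}(u_i)$ is even, so there cannot be a unique such color; hence $\tau_o(u_1)=\tau_o(u_2)=0$.

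Consequently, the forbidden set satisfies $F^{\tau}(v)\subseteq\{\tau(u_1),\tau(u_2),\tau(u_3),\tau_o(u_3)\}$ and has at most four elements. With $k\ge 5$, the available set $A^{\tau}(v)$ is nonempty; choosing any color in it for $v$ gives a proper extension, the oddness at $u_3$ (in the only nontrivial case, when $d_G(u_3)$ is even) is preserved precisely because $\tau_o(u_3)$ has been avoided, and oddness at every other vertex is unaffected since its neighborhood in $G$ agrees with its neighborhood in $G'$. This produces an odd $k$-coloring of $G$, contradicting the choice of $G$. The only real subtlety is the parity argument yielding $\tau_o(u_1)=\tau_o(u_2)=0$, which is what drops the bound on $|F^{\tau}(v)|$ from $6$ to $4$ and allows five colors to suffice; beyond this, no obstacle is expected.
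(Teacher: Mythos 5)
Your proof is correct and follows essentially the same route as the paper's: delete $v$, use minimality to get an odd $k$-coloring of $G-v$, observe that the two odd neighbors have even degree in $G-v$ so their $\tau_o$-values vanish, conclude $|F^{\tau}(v)|\le 4<k$, and note that oddness at $v$ is automatic since $d_G(v)=3$ is odd. The parity observation you highlight is exactly the step the paper invokes (``Since $x$ and $y$ are even vertices of $G'$, $\tau_o(x)=\tau_o(y)=0$''), so there is nothing to add.
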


\begin{proof}
If $G$ is an odd non-$k$-colorable minimal graph containing a $3$-vertex $v$ with $N_G(v)=\{x,y,z\}$ such that $x$ and $y$ are odd vertices of $G$, then 
$G':=G-v$ admits an odd $k$-coloring $\tau$. 
Now $F^\tau(v)=[k]\cap \{\tau(x),\tau(y),\tau(z),\tau_o(x),\tau_o(y),\tau_o(z)\}$.
Since $x$ and $y$ are even vertices of $G'$, $\tau_o(x)=\tau_o(y)=0$.  
It follows $|A^\tau(v)|\geq k-4\geq 1$.
Hence we color $v$ with a color in $A^\tau(v)$ to obtain an odd 5-coloring of $G$. 
Note that the oddness of $v$ can be always guaranteed as $v$ is an odd vertex of $G$.
\end{proof}

\begin{lem}\label{lem-e13}
A triangle $vxyv$ such that $d_G(v)=4$, $d_G(x)=d_G(y)=3$, and all neighbors of $v$ are odd is odd $k$-reducible in $\mathcal{G}$ for every integer $k\geq 5$.
\end{lem}

\begin{proof}

If $G$ is an odd non-$k$-colorable minimal graph containing such a configuration with $N_G(x)=\{v,y,x'\}$, $N_G(y)=\{v,x,y'\}$, and $N_G(v)=\{x,y,v_1,v_2\}$, then $G':=G-\{x,y\}$ has an odd $k$-coloring $\tau$. 

Now $F^\tau(x)=[k]\cap \{\tau(x'),\tau(v),\tau_o(x')\}$, and $|A^\tau(x)|\geq k-3\geq 2$. We assume w.l.o.g.\,$A^\tau(x)=\{1,2\}$. 

We extend $\tau$ to a $k$-coloring $\phi$ of $G''=G-y$ by coloring $x$ with 1. 
It follows that $A^\phi(y)=[k]\setminus \{\phi(y'),\phi_o(y'),\phi(v),\phi_o(v),\phi(x)\}$. 
If $|A^\phi(y)|\geq 1$, then we extend $\phi$ to an odd $k$-coloring of $G$ by coloring $y$ with a color in $A^\phi(y)$. Hence we assume that $k=5$ and $\{\phi(y'),\phi_o(y'),\phi(v),\phi_o(v)\}=\{2,3,4,5\}$.
Now we adjust the coloring $\phi$ to $\phi'$ by recoloring $x$ with 2. 
A similar argument as above implies $\{\phi'(y'),\phi'_o(y'),\phi'(v),\phi'_o(v)\}=\{1,3,4,5\}$.

Since $\phi(y')=\phi'(y')$ and $\phi(v)=\phi'(v)$, we assume w.l.o.g.\,$\phi(y')=3$ and $\phi(v)=4$. It follows $\phi_o(v)\in \{2,5\}$.

If $\phi_o(v)= 5$, then $\{\phi'(v_1),\phi'(v_2)\}=\{\phi(v_1),\phi(v_2)\}=\{1,5\}$, implying $\phi'_o(v)=0$, a contradiction.

If $\phi_o(v)= 2$, then $\{\phi'(v_1),\phi'(v_2)\}=\{\phi(v_1),\phi(v_2)\}=\{1,2\}$, implying $\phi'_o(v)= 1$ and $\phi_o(y')=\phi'_o(y')=5$.
Now we color $y$ with 4 and recolor $v$ with 5. This results in an odd 5-coloring of $G$.
\end{proof}

\begin{lem}\label{cl-9}
A triangle $vxyv$ adjacent to two triangles $uxvu$ and $wvyw$ such that $u\neq w$, $d_G(v)=d_G(x)=d_G(y)=4$, and $d_G(u)=3$ is odd $k$-reducible in $\mathcal{G}$ for every integer $k\geq 5$.
\end{lem}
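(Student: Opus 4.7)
The plan is to remove the unique $3$-vertex $u$ and extend an odd $k$-coloring $\tau$ of $G' = G - u$ (which exists by minimality) back to $G$. Let $x' \in N_G(x) \setminus \{v, y, u\}$, $y' \in N_G(y) \setminus \{v, x, w\}$, and $u' \in N_G(u) \setminus \{v, x\}$. A short case check using the triangles $vxyv$ and $wvyw$ shows that in $G'$ we have $\tau_o(v) \in \{0, \tau(y)\}$, with the nonzero case arising exactly when $\tau(w) = \tau(x)$; similarly, from $N_{G'}(x) = \{v, y, x'\}$, $\tau_o(x) \in \{0, \tau(v), \tau(y)\}$. Consequently
\[
F^\tau(u) \;\subseteq\; \{\tau(v), \tau(x), \tau(y), \tau(u'), \tau_o(u')\},
\]
a set of at most $5$ elements. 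If $|F^\tau(u)| < k$, then assigning to $u$ any free color yields an odd $k$-coloring of $G$, since $d_G(u) = 3$ makes oddness at $u$ automatic. This handles all $k \geq 6$.

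The crux is $k = 5$ with $F^\tau(u) = [5]$. After relabelling, $\tau(v) = 1$, $\tau(x) = 2$, $\tau(y) = 3$, and $\{\tau(u'), \tau_o(u')\} = \{4, 5\}$. Since $3 \notin \{\tau(v), \tau(x), \tau(u'), \tau_o(u')\}$, the membership $3 \in F^\tau(u)$ forces $\tau_o(v) = 3$ or $\tau_o(x) = 3$, i.e., $\tau(w) = 2$ or $\tau(x') = 1$. By the symmetry between the pairs $(v, w)$ and $(x, x')$, assume $\tau(w) = 2$.

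The core step is to recolor $v$ with some $\alpha \in \{4, 5\}$ and then assign color $1$ to $u$. Properness is immediate, oddness at $u$ is automatic, and oddness at $v$ and $x$ is easily confirmed in $G$. The main obstacle is preserving oddness at $y$ and at $w$. At $y$, the color multiset of $N_G(y)$ becomes $\{\alpha, 2, 2, \tau(y')\}$, and oddness fails precisely when $\alpha = \tau(y')$; this excludes at most one $\alpha$. At $w$, the recoloring flips the parities of exactly colors $1$ and $\alpha$ in $N_G(w)$; a short parity argument (using that the number of odd-count colors in $N_{G'}(w)$ has the same parity as $d_{G'}(w)$) shows that oddness fails only when the odd-count color set of $N_{G'}(w)$ under $\tau$ was exactly $\{1, \alpha\}$, excluding at most one $\alpha$. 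Thus a valid $\alpha$ exists unless the two excluded values are distinct and together fill $\{4, 5\}$.

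The residual exceptional case is resolved by a secondary recoloring: changing $\tau(w)$ from $2$ to a suitable value in $\{4, 5\}$ forces $\tau(w) \neq \tau(x)$, which yields $\tau_o(v) = 0$; then $3$ drops out of the forbidden set and direct extension of $\tau$ to $u$ succeeds. The same parity argument confirms this secondary recoloring preserves the odd coloring on $G'$ at the other neighbors of $w$. The alternative case $\tau(x') = 1$ is handled symmetrically by recoloring $x$ rather than $v$.
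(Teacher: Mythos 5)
Your setup is sound and matches the paper's strategy: delete $u$, observe that $\tau_o(v)\in\{0,\tau(y)\}$ (using that $w$ is adjacent to $y$) and $\tau_o(x)\in\{0,\tau(v),\tau(y)\}$, so $F^{\tau}(u)\subseteq\{\tau(v),\tau(x),\tau(y),\tau(u'),\tau_o(u')\}$, which disposes of $k\geq 6$ and pins down the $k=5$ obstruction to $\{\tau(u'),\tau_o(u')\}=\{4,5\}$ together with $\tau(w)=\tau(x)$ or $\tau(x')=\tau(v)$. Your main recoloring step (recolor $v$ with $\alpha\in\{4,5\}$, color $u$ with $1$) and the symmetric-difference analysis of when oddness at $y$ and at $w$ survives are also correct: each of $y$ and $w$ vetoes at most one value of $\alpha$.

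The gap is the \emph{residual exceptional case}, where $\tau(y')$ and the set $S_w=\{1,\gamma\}$ of odd-count colors on $N_{G'}(w)$ together veto both $4$ and $5$. Your fix is to recolor $w$ into $\{4,5\}$, but $w$ is not a vertex of the configuration: its degree and neighborhood are completely unconstrained, so this recoloring need not even be \emph{proper}. Indeed $\gamma\in S_w$ means $\gamma$ already appears on $N(w)$, ruling out $\beta=\gamma$, and nothing prevents the other color of $\{4,5\}$ from appearing (an even number of times) on $N(w)$ as well; e.g., $w$ with neighbors colored $1,3,3,5,4,4$ realizes $S_w=\{1,5\}$ yet admits no proper recolor of $w$ into $\{4,5\}$. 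Even when properness holds, preserving oddness at each \emph{other} neighbor $z$ of $w$ requires $S_z\neq\{2,\beta\}$ for the chosen $\beta$, and with arbitrarily many such $z$ both candidates can be blocked; your one-line appeal to "the same parity argument" does not control this. The same objection applies to the mirrored branch, where the rescue vertex would be $x'$ (also of unknown degree). The paper avoids this trap by only ever recoloring $v$ and $x$, whose full neighborhoods are fixed by the configuration, and by varying which of them is recolored and which color $u$ receives across subcases; note also that your two branches are not literally symmetric ($y$ is adjacent to $w$ but not necessarily to $x'$), so the mirrored branch must be checked separately. To repair your proof, replace the recoloring of $w$ by a recoloring of $x$ (with $u$ receiving color $2$) in the exceptional case, and then handle the one remaining pattern where that, too, is blocked.
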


\begin{proof}
If $G$ is an odd non-$k$-colorable minimal graph containing such a configuration, then $G':=G-u$ has an odd $k$-coloring $\tau$. Let $x'\in N_G(x)\setminus \{u,v,y\}$, $y'\in N_G(y)\setminus \{x,v,w\}$, and $u'\in N_G(u)\setminus \{x,v\}$.

Now $F^\tau(u)=[k]\cap \{\tau(x),\tau(v),\tau(u'),\tau_o(x),\tau_o(v),\tau_o(u')\}$.
If $k\geq 7$, or $k=6$ and $|F^\tau(u)|\leq 5$, or $k=5$ and $|F^\tau(u)|\leq 4$, then we 
color $u$ with a color in $[k]\setminus F^\tau(u)$ to obtain an odd $k$-coloring of $G$. 
If $k=6$ and $|F^\tau(u)|=6$, then assume w.l.o.g.\,$\tau_o(x)=1$, $\tau_o(v)=2$, $\tau(x)=3$, and $\tau(v)=4$. 
However $\tau_o(x)=1$ implies $\tau(y)=1$ but 
$\tau_o(v)=2$ implies $\tau(y)=2$, a contradiction. Hence $k=5$ and $\{\tau(x),\tau(v),\tau(u'),\tau_o(u'),\tau_o(x),\tau_o(v)\}=[5]$.
It follows that $\tau_o(x)\neq 0$ or $\tau_o(v)\neq 0$.

\vspace{2mm}\textbf{Case 1.}  $\tau_o(v)\neq 0$.

\vspace{2mm}
Assume w.l.o.g.\,$\tau(v)=1$ and $\tau_o(v)=2$. It follows $\tau(y)=2$ and $\tau(w)=\tau(x)$. Assume $\tau(x)=3$. 

\vspace{2mm}\textbf{Subcase 1.1}  $\tau_o(x)\neq 0$.

\vspace{2mm}
This situation implies $\{\tau_o(x),\tau(x')\}=\{1,2\}$. 
Since $$[5]=\{\tau(x),\tau(v),\tau(u'),\tau_o(x),\tau_o(v),\tau_o(u')\}=\{1,2,3,\tau(u'),\tau_o(u')\},$$ $\{\tau(u'),\tau_o(u')\}=\{4,5\}$. 
Recolor $x$ with a color in $\{4,5\}\setminus \{\tau_o(x')\}$
and then color $u$ with 3.
One can check that the resulting coloring is an odd $5$-coloring of
$G$. 

\vspace{2mm}\textbf{Subcase 1.2}  $\tau_o(x)=0$.

\vspace{2mm}
This situation implies $|\{\tau(x'),\tau(y),\tau(v)\}|=3$. So we assume w.l.o.g.\,$\tau(x')=4$.
Since $$[5]=\{\tau(x),\tau(v),\tau(u'),\tau_o(x),\tau_o(v),\tau_o(u')\}=\{1,2,3,\tau(u'),\tau_o(u')\},$$
$\{\tau(u'),\tau_o(u')\}=\{4,5\}$.
If $\tau_o(x')\neq 5$, then recolor $x$ with 5 and color $u$ with 3. 
If $\tau_o(x')=5$, then recolor $x$ with 1 and $v$ with a color in $\{4,5\}\setminus \{\tau_o(w)\}$, and color $u$ with 3.
In each case we obtain an odd $5$-coloring of $G$.

\vspace{2mm}\textbf{Case 2.}  $\tau_o(v)= 0$.

\vspace{2mm}

In this situation we have $[5]=\{\tau(x),\tau(v),\tau(u'),\tau_o(u'),\tau_o(x)\}$. 
So we assume w.l.o.g.\,that  
$\tau(x)=1$, $\tau_o(x)=2$, $\tau(v)=3$, $\tau(u')=4$, and $\tau_o(u')=5$.
Now $\tau_o(x)=2$ forces $\tau(y)=2$ and $\tau(x')=3$. 

% \vspace{2mm}\textbf{Subcase 1.1}  $\tau_o(v)\neq 0$.

% \vspace{2mm}
% This situation implies $\tau_o(v)=2$ and $\tau(w)=1$.
% Since $[5]=\{\tau(x),\tau(v),\tau(u'),\tau_o(x),\tau_o(v),\tau_o(u')\}=\{1,2,3,\tau(u'),\tau_o(u')\}$, $\{\tau(u'),\tau_o(u')\}=\{4,5\}$. 
% Recolor $x$ with a color in $\{4,5\}\setminus \{\tau_o(x')\}$
% and then color $u$ with 1. One can check that the resulting coloring is an odd $5$-coloring of
% $G$.

% \vspace{2mm}\textbf{Subcase 1.2}  $\tau_o(v)=0$.

% \vspace{2mm}
Since $\tau_o(v)= 0$,  $|\{\tau(w),\tau(x),\tau(y)\}|=3$. So $\tau(w)\in \{4,5\}$. Let $\tau(w)=a$ and $b\in \{4,5\}\setminus \{a\}$.

% Since $[5]=\{\tau(x),\tau(v),\tau(u'),\tau_o(x),\tau_o(v),\tau_o(u')\}=\{1,2,3,\tau(u'),\tau_o(u')\}$,
% $\{\tau(u'),\tau_o(u')\}=\{4,5\}$.
If $\tau_o(x')=a$, then recolor $x$ with $b$ and color $u$ with 1. 
If $\tau_o(x')\not=a$ and $\tau(y')\not=3$, then recolor $x$ with $a$ and color $u$ with 1.
If $\tau_o(x')\not=a$ and $\tau(y')=3$, then recolor $x$ with $a$ and $v$ with a color in $\{1,b\}\setminus \{\tau_o(w)\}$, and color $u$ with 3.
In each case we obtain an odd $5$-coloring of $G$.
\end{proof}

\begin{lem}\label{cl-4}
Two adjacent triangles $xuvx$ and $yuvy$ with $d_G(u)=d_G(v)=3$ 
is odd $k$-reducible in $\mathcal{G}$ for every integer $k\geq 5$.
\end{lem}
\begin{proof}
If $G$ is an odd non-$k$-colorable minimal graph containing such a configuration, then 
$G':=G-v$ admits an odd $k$-coloring $\tau$. 
Now $A^\tau(v)=[k]\setminus \{\tau(x),\tau(y),\tau(u),\tau_o(x),\tau_o(y)\}$. 
If $|A^\tau(v)|\geq 1$, then it is possible to color $v$ with a color in $A^\tau(v)$ to complete an odd $k$-coloring of $G$. So we assume below that $k=5$
and assume further w.l.o.g.\,$\tau(x)=1,\tau(y)=2,\tau(u)=3,\tau_o(x)=4,\tau_o(y)=5$. 
Now we recolor $u$ with 4 and color $v$ with 5. The resulting coloring of $G$ is an odd 5-coloring of $G$ as the color 3 is odd on the neighborhood of both $x$ and $y$.
\end{proof}

\begin{lem}\label{lem-k2}
Two adjacent triangles $yuvy$ and $xuvx$ with  $d_G(u)=4$ and $d_G(v)=d_G(x)=3$
is odd $k$-reducible in $\mathcal{G}$ for every integer $k\geq 5$.
\end{lem}

\begin{proof}
If $G$ is an odd non-$k$-colorable minimal graph containing such a configuration, then $G':=G-v$ admits an odd $k$-coloring $\tau$. %Let $z\in N_G(u)\setminus \{y,v,x\}$.

Now $A^\tau(v)=[k]\setminus
\{\tau(x),\tau(y),\tau(u),\tau_o(y),\tau_o(u)\}$ as $d_{G'}(x)=2$ and $\tau_o(x)=0$. If $|A^\tau(v)|\geq 1$, then 
we color $v$ with a color in $A^\tau(v)$ to obtain an odd $k$-coloring of $G$. So we assume below that $k=5$
and assume further w.l.o.g.\,$\tau(x)=1,\tau(y)=2,\tau(u)=3,\tau_o(y)=4,\tau_o(u)=5$. 
However, since $x$ and $y$ are neighbors of $u$, $5=\tau_o(u)\in \{\tau(x),\tau(y)\}=\{1,2\}$, a contradiction. 
\end{proof}

\begin{lem}\label{cl-8}
Two adjacent triangles $xuvx$ and $yuvy$ with  $d_G(u)=d_G(v)=4, d_G(x)=3$ and $d_G(y)$ being odd is odd $k$-reducible in $\mathcal{G}$ for every integer $k\geq 5$.
\end{lem}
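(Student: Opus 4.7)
The plan is as follows. Assume for contradiction that $G$ is an odd non-$k$-colorable minimal graph in $\mathcal{G}$ containing the configuration, and let $x'$ be the third neighbor of $x$ and $u', v'$ the fourth neighbors of $u,v$. I would delete $\{x,u,v\}$ to obtain $G' := G - \{x,u,v\} \in \mathcal{G}$, which is strictly smaller, and fix an odd $k$-coloring $\tau$ of $G'$ by minimality.

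The extension assigns pairwise distinct colors $c_x,c_u,c_v$ to $x,u,v$, since $xuvx$ is a triangle. Because $d_G(x)=3$ and $d_G(y)$ are both odd, $x$ and $y$ are automatically odd in $G$ under any such extension, so we need not forbid $c_u,c_v$ from coinciding with $\tau_o(y)$. A short count on $N_G(u)=\{v,x,y,u'\}$ shows that $u$'s oddness in $G$ fails only when simultaneously $c_x=\tau(y)$ and $c_v=\tau(u')$, and symmetrically for $v$. Let $A_u:=[k]\setminus\{\tau(y),\tau(u'),\tau_o(u')\}$, $A_v:=[k]\setminus\{\tau(y),\tau(v'),\tau_o(v')\}$, $A_x:=[k]\setminus\{\tau(x'),\tau_o(x')\}$, and $A_x':=A_x\setminus\{\tau(y)\}$. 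It then suffices to find either (Case A) a pairwise distinct triple $(c_x,c_u,c_v)\in A_x'\times A_u\times A_v$, or else (Case B) $c_x=\tau(y)$ together with pairwise distinct $c_u\in A_u\setminus\{\tau(v')\}$ and $c_v\in A_v\setminus\{\tau(u')\}$.

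For Case A, $|A_u|,|A_v|\geq k-3\geq 2$ and $|A_x'|\geq 2$, so Hall's marriage theorem applied to $\{x,u,v\}$ yields such a triple unless Hall's condition fails on the full set, which forces $A_u=A_v=A_x'$ to be a common $2$-element set. In that situation a direct inspection shows that $\{\tau(u'),\tau_o(u')\}=\{\tau(v'),\tau_o(v')\}=\{\tau(x'),\tau_o(x')\}$ all coincide with some common $2$-set $\{p,q\}$ disjoint from $\tau(y)$, and $A_u=[k]\setminus(\{\tau(y)\}\cup\{p,q\})$. Setting $c_x=\tau(y)$ and letting $\{c_u,c_v\}$ be the two elements of $A_u$ in either order then falls under Case B: since $\tau(u'),\tau(v')\in\{p,q\}$ and $A_u\cap\{p,q\}=\emptyset$, the constraints $c_u\neq\tau(v')$ and $c_v\neq\tau(u')$ hold automatically.

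The main obstacle will be the case analysis around coincidences such as $x'=y$ (which would forbid $c_x=\tau(y)$ by properness and thus rule out Case B). In such a subcase, $\tau(x')=\tau(y)$ forces $\tau(y)\notin A_x$, so $A_x'=A_x$ and $|A_x'|=|A_x|\geq 3$, which means Hall's condition for Case A is automatic. Similar shortcuts handle the other coincidences ($u'=v'$, $u'=x'$, etc.), so Case A or Case B always applies, producing an odd $k$-coloring of $G$ and contradicting the choice of $G$.
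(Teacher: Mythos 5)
Your strategy is genuinely different from the paper's: you delete all three of $x,u,v$ and recolor them simultaneously via a Hall-type argument, whereas the paper deletes only $x$, colors it greedily, and in the stuck case uncolors and recolors $u$ alone. Your generic case (no coincidences among $x',u',v',y$), your analysis of when the oddness of $u$ and $v$ can fail, your treatment of the Hall-failure configuration, and your shortcut for $x'=y$ are all correct. The gap is in the sentence ``Similar shortcuts handle the other coincidences ($u'=v'$, $u'=x'$, etc.)'': the obstruction in those cases is of a completely different nature from the $x'=y$ one, and your constraints do not address it. If $u'=v'=:z$ and $d_G(z)$ is even, then $d_{G'}(z)=d_G(z)-2$ is even, so the number of colors appearing an odd number of times on $N_{G'}(z)$ is even and (since $\tau$ is an odd coloring and $z$ is non-isolated in $G'$) at least $2$. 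If it is exactly $2$, say $\{\alpha,\beta\}$, then $\tau_o(z)=0$, so $A_u=A_v=[k]\setminus\{\tau(y),\tau(z)\}$ excludes neither $\alpha$ nor $\beta$; choosing $\{c_u,c_v\}=\{\alpha,\beta\}$ flips both parities and leaves \emph{no} color with odd multiplicity on $N_G(z)$. Concretely, take $k=5$, $N_{G'}(z)$ colored $\{1,2\}$, $\tau(z)=3$, $\tau(y)=4$, $\tau(x')=5$, $\tau_o(x')=0$: then $A_x'=\{1,2,3\}$, $A_u=A_v=\{1,2,5\}$, and the triple $(c_x,c_u,c_v)=(3,1,2)$ satisfies every condition you impose in Case A yet destroys the oddness of $z$, which has even degree $4$ in $G$. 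The same failure mode occurs for $x'=u'$ and $x'=v'$ (two of the newly colored vertices landing in one external neighborhood).

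So the proof is incomplete as written. It is likely repairable within your framework --- when an external vertex $z$ with $d_G(z)$ even is adjacent to exactly two of $\{x,u,v\}$ you must additionally forbid the corresponding pair of new colors from being exactly the pair of odd-multiplicity colors at $z$, and you get slack because $\tau_o=0$ forces $|A_u|\geq k-2\geq 3$ in that situation --- but this requires redoing the counting, not a ``similar shortcut.'' The paper avoids the issue entirely by never uncoloring $u$ and $v$ at the same time, so that every previously colored vertex gains at most one new neighbor and the single prohibition $c\neq\tau_o(\cdot)$ suffices.
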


\begin{proof}
If $G$ is an odd non-$k$-colorable minimal graph containing such a configuration, then $G':=G-x$ admits an odd $k$-coloring $\tau$. Let $x'\in N_G(x)\setminus \{u,v\}$, $u'\in N_G(u)\setminus \{x,y,v\}$, and $v'\in N_G(v)\setminus \{x,y,u\}$.

If we are able to color $x$ with a color $a\in [k]\setminus \{\tau(u),\tau(v),\tau(y),\tau(x'),\tau_o(x')\}$, then we obtain an odd $k$-coloring of $G$.
Note that the oddnesses on $u$ and $v$ can be guaranteed as $|\{a,\tau(v),\tau(y)\}|=3$ and $|\{a,\tau(u),\tau(y)\}|=3$.

So we assume below that $k=5$ and 
assume further w.l.o.g.\,$\tau(u)=1,\tau(v)=2,\tau(y)=3,\tau(x')=4,\tau_o(x')=5$. 
If $\tau(u')\neq 2$ and $\tau(v')\neq 1$, then we complete an odd $5$-coloring of $G$ by coloring $x$ with 3.
Otherwise, we assume w.l.o.g.\,$\tau(u')= 2$.

Now erase the color on $u$ and denote the resulting odd $k$-coloring of
$G''=G'-u=G-\{u,x\}$ by $\phi$. Since $d_{G''}(y)$ and $d_{G''}(v)$ are even, $\phi_o(y)=\phi_o(v)=0$.
Hence $F^\phi(u)=[5]\cap \{\phi(y),\phi(v),\phi(u'),\phi_o(u')\}=[5]\cap \{2,3,\phi_o(u')\}$ and $A^\phi(u)=\{1,4,5\}\setminus \{\phi_o(u')\}$.
We color $u$ with a color $a\in A^\phi(u)\setminus \{1\}$ and $x$ with 1 to finish a 
 an odd 5-coloring of $G$.
\end{proof}

%%%%  12.22 PM 13:42 STOPPED HERE

\begin{lem}\label{cl-10}
A $5$-fan $F[v;u_0u_1u_2u_3u_4]$ such that
$d_G(v)=5$, $d_G(u_1)=d_G(u_3)=4$, and $d_G(u_2)=3$ is odd $k$-reducible in $\mathcal{G}$ for every integer $k\geq 5$.
\end{lem}

\begin{proof}
If $G$ is an odd non-$k$-colorable minimal graph containing such a configuration, then $G':=G-u_2$ has an odd $k$-coloring $\tau$. Let $x\in N_G(u_1)\setminus \{u_2,v,u_0\}$ and $y\in N_G(u_3)\setminus \{u_2,v,u_4\}$.
Note that $\tau_o(v)=0$ as $d_{G'}(v)=4$. 

Now $F^\tau(u_2)=[k]\cap \{\tau(u_1),\tau(v),\tau(u_3),\tau_o(u_1),\tau_o(u_3)\}$. 
If $k\geq 6$, then we color $u_2$ with a color in $[k]\setminus F^\tau(u_2)$ to complete an odd $k$-coloring of $G$. So we assume $F^\tau(u_2)=[k]=[5]$ and assume further w.l.o.g.\,$\tau(u_1)=1,\tau(v)=2,\tau(u_3)=3,\tau_o(u_1)=4$, and $\tau_o(u_3)=5$. 
It follows $ \tau(x)=\tau(y)=2, \tau(u_0)=4,$ and $\tau(u_4)=5$.

If $\{\tau_o(x), \tau_o(u_0)\}\neq \{3,5\}$, then recolor $u_1$ with a color in $\{3,5\}\setminus \{\tau_o(x), \tau_o(u_0)\}$  and color $u_2$ with 1. 
If $\{\tau_o(y), \tau_o(u_4)\}\neq \{1,4\}$, then recolor $u_3$ with a color in $\{1,4\}\setminus \{\tau_o(y), \tau_o(u_4)\}$  and color $u_2$ with 3.
In each case one can easily check that the resulting coloring is an odd $5$-coloring of $G$.

If $\{\tau_o(x), \tau_o(u_0)\}= \{3,5\}$ and $\{\tau_o(y), \tau_o(u_4)\}= \{1,4\}$, 
then the color 2 is even (appearing at least twice) on $N_{G'}(u_0)$ and $N_{G'}(u_4)$. We recolor $u_1$
with $\tau_o(u_0)$ and change the color of $v$ from $2$ to $1$. %$\{3,5\}\setminus \{\tau_o(x')\}$.
Note that $\tau_o(u_0)\neq \tau_o(x)$ and the color 2 is now odd on $N_{G}(u_0)$ and $N_{G}(u_4)$ no matter which color is assigned to $u_2$.
Hence we can color $u_2$ with 2 to obtain an 
odd $5$-coloring of $G$. 
\end{proof}

\begin{lem}\label{lem-e11}
A $5$-fan $F[v;u_0u_1u_2u_3u_4]$ such that $d_G(v)=6, d_G(u_1)=4$, and $d_G(u_2)=d_G(u_3)=3$
is odd $k$-reducible in $\mathcal{G}$ for every integer $k\geq 5$.
\end{lem}

\begin{proof}

If $G$ is an odd non-$k$-colorable minimal graph containing such a configuration, then $G':=G-\{u_2,u_3\}$ has an odd $k$-coloring $\tau$. Let $v_0 \in N_G(v)\setminus \{u_0,u_1,u_2,u_3,u_4\}$. 

Now $F^\tau(u_2)=[k]\cap \{\tau(u_1),\tau(v),\tau_o(u_1)\}$, and $|A^\tau(u_2)|\geq k-3\geq 2$. We assume w.l.o.g.\,$A^\tau(u_2)=\{1,2\}$. 

We extend $\tau$ to a $k$-coloring $\phi$ of $G''=G-u_3$ by coloring $u_2$ with 1. 
It follows that $A^\phi(u_3)=[k]\setminus \{\phi(u_4),\phi_o(u_4),\phi(v),\phi_o(v),\phi(u_2)\}$. 
If $|A^\phi(u_3)|\geq 1$, then we extend $\phi$ to an odd $k$-coloring of $G$ by coloring $u_3$ with a color in $A^\phi(u_3)$. Hence we assume that $k=5$ and $\{\phi(u_4),\phi_o(u_4),\phi(v),\phi_o(v)\}=\{2,3,4,5\}$.
We now adjust the coloring $\phi$ to $\phi'$ by recoloring $u_2$ with 2. A similar argument as above would imply 
$\{\phi'(u_4),\phi'_o(u_4),\phi'(v),\phi'_o(v)\}=\{1,3,4,5\}$.

% It follows that $A^{\phi'}(u_3)=[k]\setminus \{\phi'(u_4),\phi'_o(u_4),\phi'(v),\phi'_o(v),\phi'(u_2)\}$. If $|A^{\phi'}(u_3)|\geq 1$, then we extend $\phi'$ to an odd $k$-coloring of $G$ by coloring $u_3$ with a color in $A^{\phi'}(u_3)$. Hence we assume $\{\phi'(u_4),\phi'_o(u_4),\phi'(v),\phi'_o(v)\}=\{1,3,4,5\}$.

%Combine the two cases above, we extend from $\phi$. 

%We look back at the coloring $\phi$.
Since $\phi(u_4)=\phi'(u_4), \phi_o(u_4)=\phi'_o(u_4)$, and $\phi(v)=\phi'(v)$, we conclude that
$\{\phi(u_4),\phi_o(u_4),\phi(v)\}= \{3,4,5\}$ and thus
$\phi_o(v)=2$.%, $\phi'_o(v)=1$. 
Assume w.l.o.g.\,$\phi(v)=5$, $\phi(u_4)=4$, and $\phi_o(u_4)=3$.

Since $\phi(u_1)=\tau(u_1)\not\in \{1,2\}$ and $\phi_o(v)=2$, we have $\{\phi(u_0),\phi(v_0)\}=\{1,2\}$ and $\phi(u_1)=\phi(u_4)=4$. 
We now color $u_3$ with 1 and recolor $u_2$ with 3. One can check that the resulting coloring is an odd 5-coloring of $G$. Note that the oddness of $v$ is satisfied as $u_2$ is the unique neighbor of $v$ that is colored with 3, and the oddness of $u_1$ is also guaranteed as $v,u_0,u_2$ are colored different.
\end{proof}

\begin{lem}\label{lem-e12}
Two $5$-fans $F[v;u_0u_1u_2u_3u_4]$ and $F[u_1;u_2vu_0xy]$ such that $d_G(v)=d_G(u_1)=6$, $d_G(u_0)=d_G(x)=d_G(u_2)=d_G(u_3)=3$, and the vertex in $N_G(v)\setminus \{u_0,u_1,u_2,u_3,u_4\}$ is an odd vertex
is odd $k$-reducible in $\mathcal{G}$ for every integer $k\geq 5$.
\end{lem}

\begin{proof}

If $G$ is an odd non-$k$-colorable minimal graph containing such a configuration, then $G':=G-\{u_2,u_3\}$ has an odd $k$-coloring $\tau$. Let $N_G(v)=\{w_1,u_0,u_1,u_2,u_3,u_4\}$ and $N_G(u_1)= \{w_2,u_2,v,u_0,x,y\}$.

Now $F^\tau(u_2)=[k]\cap \{\tau(u_1),\tau(v),\tau_o(u_1)\}$, and $|A^\tau(u_2)|\geq k-3\geq 2$. We assume w.l.o.g.\,$A^\tau(u_2)=\{1,2\}$. 
We extend $\tau$ to a $k$-coloring $\phi$ of $G''=G-u_3$ by coloring $u_2$ with 1. 
It follows that $A^\phi(u_3)=[k]\setminus \{\phi(u_4),\phi_o(u_4),\phi(v),\phi_o(v),\phi(u_2)\}$. 
If $|A^\phi(u_3)|\geq 1$, then we extend $\phi$ to an odd $k$-coloring of $G$ by coloring $u_3$ with a color in $A^\phi(u_3)$. Hence we assume that $k=5$ and $\{\phi(u_4),\phi_o(u_4),\phi(v),\phi_o(v)\}=\{2,3,4,5\}$.
We adjust the coloring $\phi$ to $\phi'$ by recoloring $u_2$ with 2. 
A similar argument as above implies
% It follows that $A^{\phi'}(u_3)=[k]\setminus \{\phi'(u_4),\phi'_o(u_4),\phi'(v),\phi'_o(v),\phi'(u_2)\}$. If $|A^{\phi'}(u_3)|\geq 1$, then we extend $\phi'$ to an odd $k$-coloring of $G$ by coloring $u_3$ with a color in $A^{\phi'}(u_3)$. Hence we assume 
$\{\phi'(u_4),\phi'_o(u_4),\phi'(v),\phi'_o(v)\}=\{1,3,4,5\}$.

Since $\phi(u_4)=\phi'(u_4), \phi_o(u_4)=\phi'_o(u_4)$, and $\phi(v)=\phi'(v)$, we conclude that 
$\{\phi(u_4),\phi_o(u_4),\phi(v)\}= \{3,4,5\}$ and therefore
$\phi_o(v)=2$ and $\phi'_o(v)=1$.  
Assume w.l.o.g.\,$\phi(v)=5$, $\phi(u_4)=3$, and $\phi_o(u_4)=4$. 
Since $\phi(u_1)=\tau(u_1)\not\in \{1,2\}$ and $\phi_o(v)=2$, $\phi(u_1)=\phi(u_4)=3$ and  
$\{\phi(u_0),\phi(w_1)\}= \{1,2\}$. Let $\phi(u_0)=a$ and $b\in \{1,2\}\setminus \{a\}$.

We now, starting from $\phi$ again, recolor $v$ with 4 and $u_2$ with 5, and then color $u_3$ with 2. Denote the resulting coloring by $\theta$.
Note that the color 5 is even (appearing at least twice) on $N_{G}(u_4)\setminus \{u_3\}$ under $\phi$, and thus it 
is odd  on $N_{G}(u_4)$ under $\theta$, so the oddness of $u_4$ is satisfied.

If $\{\theta(x),\theta(y),\theta(w_2)\}\neq \{a,4,5\}$, then the oddness 
of $u_1$ is guaranteed as $\{\theta(v),\theta(u_0),\theta(u_2)\}= \{a,4,5\}$. 
If $\{\theta(x),\theta(y),\theta(w_2)\}= \{a,4,5\}$, then recolor $u_0$ with $b$, and thus 
the oddness  of $u_1$ is also satisfied as $u_0$ is the unique neighbor of $u_1$ that is colored with $b$.

In each case, the oddness of $v$ is satisfied as $u_2$ is the unique neighbor of $v$ that is colored with 5, and the oddnesses of  $x,u_0,u_2,u_3$ and $w_1$ are preserved definitely as they are odd vertices.
Hence $\theta$ is odd, as desired.
\end{proof}

Now we are ready to prove Theorem \ref{thm-2}. Suppose by contradiction that there is 
an odd non-$5$-colorable minimal 2-boundary planar graph $G$.

If $G\cong P$, then assume $V(f_{out})=\{u_1,u_2,\dots,u_n\}$, $V(f_{in})=\{v_1,v_2,\dots,v_n\}$, $N_G(v_1)=\{u_n, u_1, v_n, v_2\}$, $N_G(v_n)=\{u_{n-1}, u_n, v_{n-1}, v_1\}$, and $N_G(v_i)=\{u_{i-1}, u_i, v_{i-1}, v_{i+1}\}$ for each $i=2,3,\dots,n-1$. 
If $n$ is even, we construct an odd $4$-coloring $\tau$ of $G$ by letting $\tau(u_i)=1, \tau(u_{i+1})=2, \tau(v_i)=3$, and $\tau(v_{i+1})=4$ for $i=1,3,\dots,n-1$. 
If $n$ is odd, we construct an odd $5$-coloring $\tau$ of $G$ by letting 
$\tau(u_i)=4, \tau(u_{i+1})=5$ for $i=1,3,\ldots,n-4,n-1$, $\tau(u_{n-2})=3$, 
$\tau(v_i)=1$, $\tau(v_{i+1})=2$ for $i=1,3,\dots,n-2$, and $\tau(v_n)=3$.
Hence $\chi_o(G)\leq 5$, a contradiction.

It follows $G\not\cong P$ and thus 
$G$ contains at least one configuration among \ref{a2}, \ref{b2}, \ref{c2}, \ref{d2}, \ref{e13}, \ref{h2}, \ref{k21}, \ref{k2}, \ref{g2}, \ref{i2}, \ref{e11}, and \ref{e12} by Lemma \ref{lem-2}. However, \ref{a2}, \ref{b2}, \ref{c2}, \ref{d2}, \ref{e13}, \ref{h2}, \ref{k21}, \ref{k2}, \ref{g2}, \ref{i2}, \ref{e11}, and \ref{e12} are respectively odd 5-reducible by Lemma \ref{cl-1}, \ref{cl-2}, \ref{cl-3}, \ref{cl-5}, \ref{lem-e13}, \ref{cl-9}, \ref{cl-4}, \ref{lem-k2}, \ref{cl-8}, \ref{cl-10}, \ref{lem-e11}, and \ref{lem-e12}, a contradiction. \hfill $\blacksquare$

\vspace{3mm} 

We end this section with the following open problem:

\begin{pblm} \label{problem-1}
Characterize all $2$-boundary planar graphs that have odd chromatic number exactly $5$.
\end{pblm}

\noindent 
Towards Problem \ref{problem-1}, we propose the following conjecture:

\begin{conj}
A connected $2$-boundary planar graph $G$ has odd chromatic number exactly $5$ if and only if 
every block of $G$ is isomorphic to $C_5$.
\end{conj}

\section{Concluding Remarks} \label{sec:4}

This paper introduces the concept of 2-boundary planar graphs. While our primary objective is to verify the conjecture of Petru\v{s}evski and \v{S}krekovski for this particular class of graphs, it is worth noting that this new graph class possesses its own independent interest.

First, we focus on a Tur\'an-type problem, which ask for the maximum number of edges a 2-boundary planar graph can have. 

\begin{theorem} \label{edge}
Every 2-boundary planar graph on $n$ vertices has at most $2n$ edges, and this bound is tight.
\end{theorem}

\begin{proof}
Given a 2-boundary planar graph $G$, we iteratively perform the following operations until no further operations can be applied:

\begin{itemize} 
    \vspace{-.5em}\item delete a vertex of degree 1;
    \vspace{-.5em}\item delete a vertex of degree 2;
    \vspace{-.5em}\item for each vertex $u$ of degree 3, 
    \begin{itemize}
       \vspace{-.5em} \item if $u$ is not incident with any chordal edge, then $u$ is incident with two boundary edges $uv$ and $uw$, and an interconnected edge $ux$. In this case, we delete edges $uv,uw,ux$ and creat a new edge $vw$; 
       \vspace{-.5em} \item if $u$ is incident with a chordal edge $uz$, then $u$ is incident with two boundary edges $ux$ and $uy$. In this case, we delete edges $ux,uy,uz$ and create a new edge $xy$.
    \end{itemize}
\end{itemize}

Let $H$ be the resulting graph, which has $n'$ vertices. Clearly, $H$ is 2-boundary planar and $\delta(H)\geq 4$, and the above operation implies $e(H)\geq e(G)-2(n-n')$. WIthout loss of generality, assume that $H$ is connected.

Assume that $u$ is a vertex of degree at least 5. 
If there exists a chordal edge $uv$, then there must be a vertex $w$ that lies on the path from $u$ to $v$ along either $f_{out}$ or $f_{in}$, and $w$ has a degree of 2, a contradiction. Hence we assume that $u$ is not incident with any chord, and thus $u\not\in V(f_{out})\cap V(f_{in})$. 
In this case, $u$ is incident with two boundary edges and at least three interconnected edges. Without loss of generality, let $ux,uy,uz$ be three interconnected edges such that $z$ lies on the path from $x$ to $y$ along either $f_{out}$ or $f_{in}$. Now, if $z$ is incident with a chordal edge, then similarly we would see a vertex of degree 2, and if not, then $z$ itself has degree of 3. Both are contradictions. Therefore, 
$\Delta(H)\leq 4$, and thus $H$ is 4-regular. It follows $e(H)=2n'$, and thus $e(G) \leq e(H)+2(n-n')=2n$, as desired.

For the tightness of this bound $2n$, see Figure \ref{fig:MP}. 
\end{proof}

According to Lemma \ref{lem-2}, every 2-boundary planar graph, except for the 4-regular 2-boundary planar graph $P$ illustrated in Figure \ref{fig:MP}, has a vertex with a degree of at most 3. A natural question is whether we can lower the bound $2n$ in Theorem \ref{edge} if we prohibit the graph $P$. However, this is not the case, as we have another types of extremal graphs which have minimum degree 3 or 2, see Figures \ref{fig:MQ} and \ref{fig:MR}. 
\begin{figure}[htp]
    \centering
    \includegraphics[width=4cm]{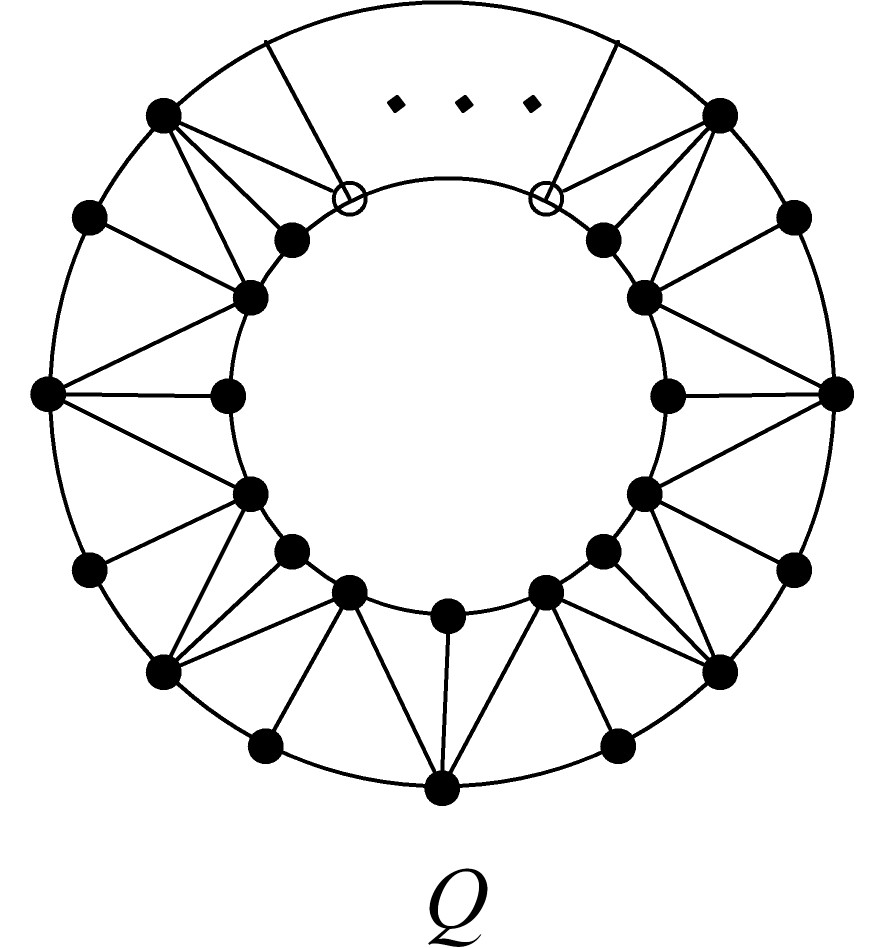}
    \caption{A 2-boundary planar graph with minimum degree 3 and $4n$ edges}
    \label{fig:MQ}
\end{figure}
\begin{figure}[htp]
    \centering
    \includegraphics[width=4cm]{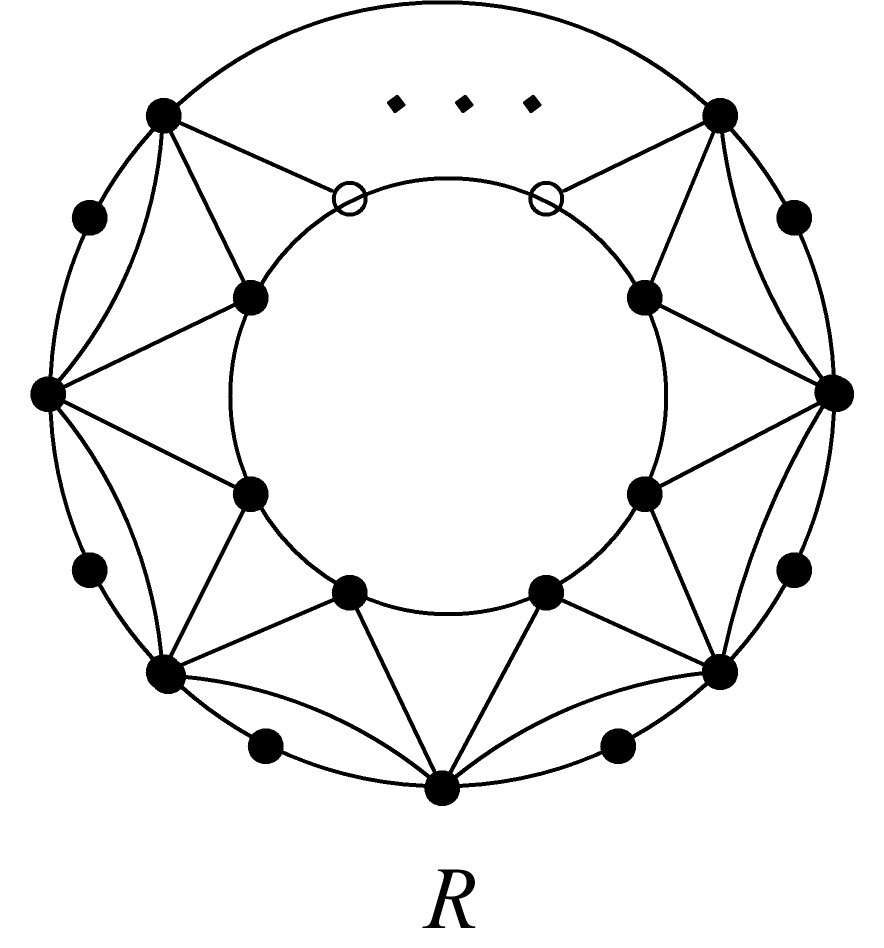}
    \caption{A 2-boundary planar graph with minimum degree 2 and $4n$ edges}
    \label{fig:MR}
\end{figure}

A 2-boundary planar graph $G$ is \textit{maximal} if the addition of any edge would result in the graph no longer being 2-boundary planar. 
Given an $n$-vertex maximal 2-boundary planar graph $G$, if $V(f_{out})\cap V(f_{in})=\emptyset$, then it has exactly $2n$ edges. Actually,
we can add $|f_{out}|-3$ edges outside $f_{out}$ and $|f_{in}|-3$ edges inside $f_{in}$ to make $G$ being a maximal planar graph, which has exactly $3n-6$ edges. Therefore, the number of edges in $G$ is exactly $(3n-6)-(|f_{out}|-3)-(|f_{in}|-3)=2n$, as $|f_{out}|+|f_{in}|=n$. However, if $|V(f_{out})\cap V(f_{in})|=\ell\geq 1$, then by a similar calculation we conclude that $G$ has at most $2n-\ell$ edges. This indicates that for a 2-boundary planar graph, the maximal property does not imply the maximum property, which is a phenomenon distinct from that of planar and outerplanar graphs. This motivates a natural question:

\begin{question}
How many edges must a maximal $2$-boundary planar graph with $n$ vertices have at minimum?
\end{question}

Theorem \ref{edge} finds an interesting application in edge coloring. The celebrated Vizing's theorem asserts that for any graph $G$ the chromatic index $\chi'(G)$ is either $\Delta(G)$ or $\Delta(G)+1$.
Graphs $G$ are classified as \textit{class 1} if $\chi'(G) = \Delta(G)$, and as \textit{class 2} if $\chi'(G) = \Delta(G) + 1$.
A graph $G$ is \textit{critical} if it is connected, belongs to class 2, and satisfies $\chi'(G - e) < \chi'(G)$ for every edge $e$ in $G$. When a graph $G$ is critical and has a maximum degree of $\Delta$, we refer to it as \textit{$\Delta$-critical}. Yap \cite{zbMATH03762083} showed that if $G$ be a $\Delta$-critical graph with $\Delta\geq 5$, then $G$ has at least $2|G|+1$ edges. Now it is sufficient to prove the following.

\begin{theorem} \label{ec}
If $G$ is a 2-boundary planar graph and $\Delta(G)\geq 5$, then
$\chi'(G)=\Delta(G)$.
\end{theorem}

\begin{proof}
If not, then $G$ is of class 2 and thus has a $\Delta(G)$-critical subgraph $G'$. The result of Yap implies that $G'$ has at least $2|G'|+1$ edges. However, $G'$ is a 2-boundary planar graph and thus has at most $2|G'|$ edges by Theorem \ref{edge}. This contradiction ends the proof.
\end{proof}

The maximum degree bound $5$ in Theorem \ref{ec} is tight because there exists 2-boundary planar graphs with maximum degree 4 and chromatic index 5, see Figure \ref{fig:class2}.
\begin{figure}[htp]
    \centering
    \includegraphics[width=4cm]{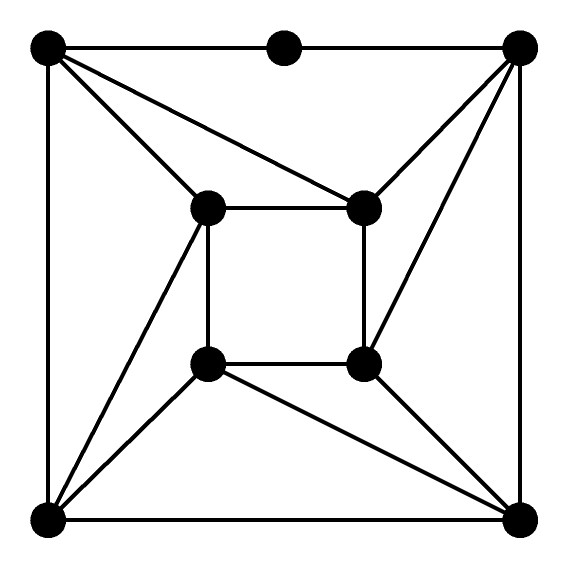}
    \caption{A class 2 2-boundary planar graph with maximum degree 4}
    \label{fig:class2}
\end{figure}

The \textit{dual} $G^*$ of a plane graph $G$ is a graph formed by placing a vertex in each face of $G$ and connecting vertices corresponding to adjacent faces with edges. The \textit{weak dual} of a plane graph $G$ is constructed by removing specific vertices from its dual graph $G^*$. For an outerplane graph $G$, this involves deleting the vertex in $G^*$ corresponding to the outer face (unbounded face) of $G$. In the case of a 2-boundary plane graph $G$, the weak dual is formed by removing both vertices in $G^*$ that represent the outer face $f_{out}$ and inside face $f_{in}$ of $G$. It is well-known that the weak dual of any outerplane graph is a forest,  while the weak dual of a biconnected outerplane graph is a tree.  In this following, we show that weak dual of any 2-boundary plane graph is a forest or a unicyclic graph, a graph possessing exactly one cycle.

\begin{theorem} \label{unicyclic}
The weak dual of a 2-boundary plane graph is a forest or a unicyclic graph.
\end{theorem}

\begin{proof}
We divide our proof into two cases as follows:

\vspace{2mm}\textbf{Case 1:} $V(f_{out})\cap V(f_{in})\not=\emptyset$.\vspace{2mm}

Let $v\in V(f_{out})\cap V(f_{in})$. We perform a vertex splitting operation on graph $G$ at vertex $v$, creating a new graph $G'$ by replacing $v$ with two non-adjacent vertices $v_1$ and $v_2$, ensuring that if $v_1$ and $v_2$ are subsequently merged (identified as a single vertex), the original graph $G$ is recovered. This process merges the outer face $f_{out}$ and inside face $f_{in}$ into a unified face. 
The constructed graph $G'$ is an outerplane graph, implying its weak dual is a forest. Critically, under the specific transformation applied, the weak dual structure of $G'$ remains identical to that of the original graph $G$. This equivalence establishes that the weak dual of $G$ itself must also form a forest,

\vspace{2mm}\textbf{Case 2:} $V(f_{out})\cap V(f_{in})=\emptyset$.\vspace{2mm}

If $G$ has no interconnected edge, then it is easy to check that the weak dual of $G$ is a star. So we assume that $G$ has an interconnected edge $uv$ such that $u\in V(f_{out})$ and $v\in V(f_{in})$. We perform an edge-based graph dissection along the edge $uv$, constructing a new graph $G'$ through vertex splitting operations. Specifically, vertex $u$ is divided into two non-adjacent vertices $u_1, u_2$, and vertex $v$ into two non-adjacent vertices $v_1, v_2$. In this construction, edges $u_1v_1$ and $u_2v_2$ are explicitly added to $G'$. This transformation preserves the original structure such that merging $u_1$ with $u_2$, and $v_1$ with $v_2$, in $G'$ perfectly reconstructs the original graph $G$. Again, we come to an outerplane graph $G'$ whose weak dual $F$ is a forest. Let $f_1$ and $f_2$ be the two faces incident with $uv$ in $G$. Since $f_1$ and $f_2$ correspond to two adjacent vertices $w_1$ and $w_2$ in the weak dual of $G$, now, the weak dual of $G$ is $F+w_1w_2$, which is a unicyclic graph.
\end{proof}

\begin{remark}
    The proof of Theorem \ref{unicyclic} also implies that if the weak dual of a 2-boundary plane graph is not a cycle, then it contains at least one leaf. The weak dual of an outerplane graph has numerous applications in the literature, including the subgraph isomorphism problem \cite{MR644795}, the induced matching problem \cite{MR2858009}, the outerplanar Tur\'an number problem \cite{MR4851767, MR4630877}, the edge-coloring problem \cite{li2024packingedgecoloringssubcubicouterplanar}, and the square coloring problem \cite{MR2379155, MR2291059}. We believe that the weak dual of a 2-boundary planar graph will also have similar applications in the future.
\end{remark}

A graph property is \textit{minor-closed} if it remains unchanged under vertex deletion, edge deletion, and edge contraction. In other words, if a graph has a minor-closed property $P$, then all of its minors will also exhibit property $P$. For instance, Kuratowski's Theorem states that a graph is planar if and only if it contains no minors isomorphic to $K_5$ or $K_{3,3}$, from which one can deduce that a graph is outerplanar if and only if it contains no minors isomorphic to $K_4$ or $K_{2,3}$. Hence, both planarity and outerplanarity are examples of minor-closed properties. For 2-boundary planarity, we propose the following question:

\begin{question}
Can 2-boundary planarity be characterized as a minor-closed property? Furthermore, if it is, does there exist a finite set $\mathcal{G}$ of graphs, such that a graph is 2-boundary planar if and only if it does not contain any minors isomorphic to any graph within $\mathcal{G}$?
\end{question}

It is important to note that for three classes of graphs, namely $\mathcal{G}_a,\mathcal{G}_b$, and $\mathcal{G}_c$, where $\mathcal{G}_a\subset \mathcal{G}_b \subset \mathcal{G}_c$, the fact that both $\mathcal{G}_a$ and $\mathcal{G}_c$ are minor-closed does not necessarily imply that $\mathcal{G}_b$ is minor-closed. For example, let $\mathcal{G}_a,\mathcal{G}_b$, and $\mathcal{G}_c$ be the class of outerplanar graphs, outer-1-planar graphs, and planar graphs. A graph is outer-1-planar if it can be drawn in the plane such that all vertices are in the outer face and each edge is crossed at most once. Outer-1-planar graphs were first introduced by Eggleton \cite{MR846198} under the notion of  \textit{outerplanar graphs with edge crossing number one} \cite{MR846198}. They were also known as \textit{pseudo-outerplanar graphs} \cite{MR2945171,MR3084275,MR3203677} in the literature. Hong et al.\,\cite{Hong2014ALA} showed that every outer-1-planar graph is planar. Nevertheless, outer-1-planar graphs are not closed under either edge contraction or subdivision. Consequently, it is impossible to characterize outer-1-planar graphs by prohibiting certain minors \cite{zbMATH06587983}. However, Auer et al.\,\cite{zbMATH06587983} revealed that if a graph is not outer-1-planar, then it must contain at least one of the six graphs in a specific set as a minor.

\begin{remark} \label{rmk}
    Now that we have mentioned outer-1-planar graphs, let us talk about the odd coloring of outer-1-planar graphs briefly. Li and Zhang \cite{dmtcs:8381} showed that every outer-$1$-planar graph $G$ contains 
\begin{enumerate}[label={\rm \textbf{(\alph*$\ref{rmk}$)}}]\setlength{\itemsep}{-3pt}
\item \label{a1} a vertex $v$ of degree $1$, or
\item \label{b1} a vertex $v$ with $N_G(v)=\{u,w\}$ such that $uw\in E(G)$, or
\item \label{c1} a vertex $v$ with $N_G(v)=\{u,w\}$ such that $uw\not\in E(G)$ and $G-v+uw$ is outer-1-planar, or
\item \label{d1} two adjacent triangles $xuvx$ and $yuvy$ with  $d_G(u)=d_G(v)=3$.
\end{enumerate}
With this in hand, we show that every outer-1-planar graph has an odd 5-coloring, confirming Conjecture \ref{conj} for this planar subclass. Suppose by contradiction that there is an odd non-5-colorable minimal outer-1-planar graph $G$, which contains at least one configuration among \ref{a1}, \ref{b1}, \ref{c1}, and \ref{d1}. However, \ref{a1}, \ref{b1}, \ref{c1}, and \ref{d1}
are respectively odd 5-reducible by Lemma \ref{cl-1}, \ref{cl-2}, \ref{cl-3}, and \ref{cl-4}, a contradiction.
\end{remark}

An additional interesting problem regarding 2-boundary planar graphs pertains to the testing algorithm, leading us to pose the following question:

\begin{question}
Can there be a linear-time algorithm that not only tests whether a given graph is $2$-boundary planar but also, if it exists, produces a $2$-boundary planar embedding in linear time?
\end{question}

 %There is a linear algorithm to recognize outerplanar and maximal outerplanar graphs \cite{MR552536}

%\section{Open problems}

\bibliographystyle{abbrv}
\bibliography{ref}

% \section*{Statements \& Declarations}

% % The authors declare that no funds, grants, or other support were received during the preparation of this manuscript. 
% This work was supported by the Natural Science Basic Research Plan in Shaanxi Province of China (No.\,2023-JC-YB-001) and the National Natural Science Foundation of China (No.\,11871055).
% The authors have no relevant financial or non-financial interests to disclose. 

% \section*{Author contribution statement}

% \begin{itemize}
%     \item Conception or design of the work: Qi and Zhang
% \item Drafting the article: Qi
% \item Critical revision of the article: Zhang
% \end{itemize}

% \section*{Data availability}

% Data sharing not applicable to this article as no datasets were generated or analysed during the current study.

\end{document}